\documentclass[11pt]{amsart}
\usepackage{amsmath}
\usepackage{amsthm}
\usepackage{amssymb}
\usepackage{amsfonts}
\usepackage{graphicx}
\usepackage{mathrsfs,bm,amscd}
\usepackage{latexsym,enumitem}
\usepackage{color}
\usepackage{marginnote}
\usepackage{hyperref}
\usepackage{caption}
\usepackage{subcaption}
\usepackage{graphicx}
\usepackage{mathrsfs,bm,amscd}

\usepackage[all]{xy}

\numberwithin{equation}{section}
\theoremstyle{plain}

\newtheorem{thm}{Theorem}[section]
\newtheorem{prop}[thm]{Proposition}
\newtheorem{cor}[thm]{Corollary}

\newtheorem{lem}[thm]{Lemma}

\theoremstyle{definition}

\newtheorem{definition}[thm]{Definition}

\newtheorem*{bprob*}{Bonus problem}

\def\R{\mathbb{R}}

\def\N{\mathbb{N}}

\def\e{\varepsilon}

\def\d{\delta}

\def\a{\alpha}

\DeclareMathOperator{\diam}{diam}
\DeclareMathOperator{\Hull}{Hull}
\DeclareMathOperator{\dist}{dist}

\newcommand{\cW}{\mathcal{W}}

\newcommand{\cE}{\mathcal{E}}

\newcommand{\eps}{\varepsilon}

\newcommand{\card}{\operatorname{card}}

\author{Efstathios-K. Chrontsios-Garitsis}
\author{Vyron Vellis}

\subjclass[2020]{Primary 46E36; Secondary 28A80}
\keywords{Sobolev spaces, $p$-energies, metric trees, fractal trees, dendrites, Korevaar-Schoen spaces, capacity, walk dimension}
\thanks{V.V was partially supported by NSF DMS grant 2154918.}

\address{Department of Mathematics\\ The University of Tennessee\\ Knoxville, TN 37966}
\email{echronts@utk.edu}
\address{Department of Mathematics\\ The University of Tennessee\\ Knoxville, TN 37966}
\email{vvellis@utk.edu}

\begin{document}

\title{Sobolev spaces on snowtrees}

\begin{abstract}
We introduce a discrete-energy Sobolev space $\cW^{1,p}_{\mathscr V}(T)$ on  Ahlfors regular snowtrees, a class of metric trees 
%where every two branches are bi-Lipschitz to each other uniformly
where every arc is a snowflake of the same type. Our main result shows that, for every partition $\mathscr V$ and every $1<p<\infty$, this discrete space coincides quantitatively with the Korevaar--Schoen space on $T$. This fact and the independence of the space on the particular partition used to define $\cW^{1,p}_{\mathscr V}(T)$ are both novel even for the class of geodesic trees. We also determine the critical Korevaar--Schoen exponent for Ahlfors regular snowtrees and prove  capacity attainment and upper estimates, which reveal the appropriate walk dimension needed for the corresponding probabilistic profile on these trees.
\end{abstract}

\maketitle

\section{Introduction}

An essential tool in the areas of PDEs and calculus of variations is the class of Sobolev functions. On Euclidean spaces they may be defined in several equivalent ways: by distributional derivatives, by upper gradients, by difference quotients, by variational energies, or by maximal functions. The increasing interest and  need to extend this theory to metric spaces soon emerged, and has been a very active line of research for the past two and a half decades.  Applications of this endeavor include the development of the theory of  PDEs \cite{KigamiAnFr}, calculus of variations \cite{AmbrosioBV} and optimal transportation \cite{AmbrosioGradFlow} on the non-smooth setting of fractal spaces. 

The theory of Sobolev-type functions defined on metric spaces has been developed by many authors (for instance \cite{CheegerSob}, \cite{HajSob}, \cite{HajKoskSob}, \cite{KorSchSob}, \cite{NagesNewtonianSob}). While  the equivalence of many of these viewpoints in the Euclidean setting is classical, in the metric setting the underlying space often determines the most appropriate notion, with different ideas needed to adjust the theory to different settings. We refer to \cite{Juha-Jeremy-etc-book} for a detailed exposition. It is also worth noting that the type of Sobolev functions a metric space supports reveals information on the appropriate probabilistic profile that it admits, especially through the relation of critical Sobolev exponents and the (random) walk dimension of the space \cite{JonssonCriticalExpWalkDim}. See also \cite{BarlowDiffusion, BrownianCarpet, TransCritExponFractals, MuruganRyoCarpet} for more connections to probability.

The metric spaces our manuscript focuses on is the class of (metric) trees. A metric space $X$ is called a \emph{(metric) tree} if it does not contain simple closed curves, and it is a Peano continuum, i.e., compact, connected, and locally  connected. In other words, any two points $x,y$ can be joined by a unique arc that has $x,y$ as endpoints. On the one hand, trees are among the most simple 1-dimensional connected metric spaces, but on the other hand they have infinitely many topological equivalence classes. 
%Nadler \cite{Nadler} proved that the class of metric trees contains a topologically universal element, i.e., there is a tree into which every tree embeds.
	
Recently, there has been great interest in a class of trees with controlled geometry, known as \emph{quasiconformal trees}, which play an important role in analysis on metric spaces. Quasiconformal trees were introduced by Kinneberg \cite{Kinneberg}\footnote{Kinneberg initially called these spaces ``quasi-trees'', while the term ``quasiconformal trees'' was first used by Bonk and Meyer in \cite{BM20}.} and have appeared in complex dynamics, dimension theory, geometric group theory, harmonic analysis, random processes, and geometric function theory. See \cite{CGIV} and the references therein for an extensive discussion on the importance of quasiconformal trees.
	
Two of the most well-known and used quasiconformal trees are the \emph{continuum self similar tree} (CSST) and the \emph{Vicsek fractal}. See Figure \ref{fig:CSST}. Both are attractors of iterated function systems of similarities in $\R^2$, but we omit their formal definitions. The CSST is a trivalent quasiconformal tree that has been studied in \cite{BT_CSST, BM22} and is almost surely homeomorphic to the \emph{(Brownian) continuum random tree} introduced by Aldous \cite{Aldous1,Aldous2}. The \emph{Vicsek fractal}, is a 4-valent metric tree of great importance in analysis, probability, and physics; see for instance \cite{Vicsek,Metz,HamblyMetz,Zhou,CSW,BaudChenVicsek}. The authors jointly with Ioannidis recently showed in \cite{CGIV} that the CSST and the  Vicsek fractal are quasisymmetrically universal spaces for certain sub-classes of quasiconformal trees, further emphasizing the need for the development of a suitable Sobolev framework that fits these spaces.
	
	\begin{figure}[h]
		\centering
		\begin{minipage}{0.55\textwidth}
			\centering
			\includegraphics[width=0.9\textwidth]{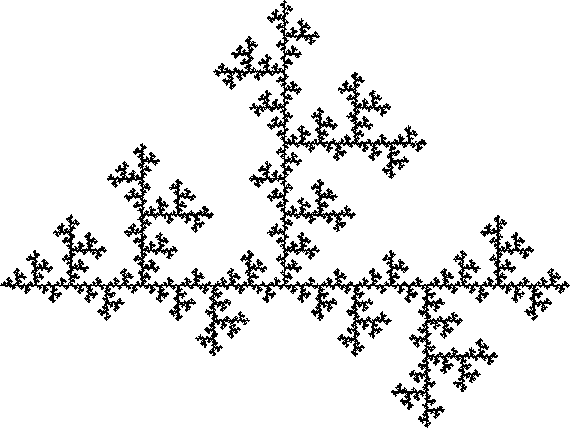}
		\end{minipage}\hfill
		\begin{minipage}{0.35\textwidth}
			\centering
			\includegraphics[width=0.9\textwidth]{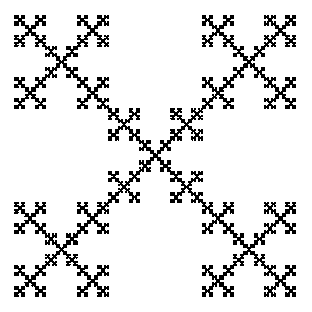}
		\end{minipage}
		\caption{The CSST (left) and the Vicsek fractal (right).}
		\label{fig:CSST}
	\end{figure}
	
The Korevaar--Schoen notion is a particularly well suited notion of Sobolev functions for the class of quasiconformal trees, since it does not depend on the existence of rectifiable arcs. Originally introduced in \cite{KorSchSob}, it measures the average oscillation of a function at all small scales. On the other hand, a different class that relies on a quotient-summability property of Sobolev functions is that of discrete energy-based Sobolev spaces. While this property aligns particularly well with the topology of metric trees, it has been particularly useful in establishing dimension distortion results in even more general   Euclidean  \cite{Kaufman, OurQCspec, QRdimDist, FraTysInterm} and  metric  \cite{CompHoldPAMS, AlvChronJFA} settings. Recent work has focused on the relation of the two, such as the work of Baudoin and Chen on the Vicsek fractal \cite{BaudChenVicsek}, the results of Murugan and Shimizu on the Sierpinski carpet and its conformal dimension \cite{MuruganRyoCarpet}, as well as the work of Anttila, Eriksson-Bique and Shimizu on Laakso-type fractal spaces \cite{SobolevLaaksoSylv}. The purpose of this paper is to carry out this program for a broad subclass of trees called \textit{snowtrees}. These trees have arcs which are all uniform snowflakes of the same type. 

\begin{definition}\label{defn:snowtrees}
A metric tree $(T,d)$ is an $\eps$-\emph{snowtree}, for some $\eps\in(0,1]$, if for any distinct $x,y\in T$, the arc $T[x,y]\subset T$ satisfies $\mathcal{H}^{1/\eps}(T[x,y])\simeq d(x,y)^{1/\eps}$.
\end{definition} 
Here and for the rest of this paper, we denote by $T[x,y]$ the unique closed arc in $T$ with endpoints $x$ and $y$. We use the corresponding notation for open and half-open arcs. It turns out that snowtrees are exactly those trees that are bi-H\"older equivalent to geodesic trees; see Appendix \ref{app}.

In order to define discrete energies, we need an appropriate discrete approximation of the underlying tree. 

\begin{definition}
A \emph{multiscale partition} of a tree $T$ is a nested family $\mathscr{V}=(V_n)_{n\in\N}$ of finite subsets of $T$ with the following properties.
\begin{enumerate}
\item The set $V_1$ contains only one element that is a leaf of $T$, denoted by $\rho_{\mathscr{V}}$ and called the \emph{root} of $\mathscr{V}$), and 
\[ \{\rho_{\mathscr{V}}\} = V_1 \subset V_2 \subset \cdots \subset T.\]
\item For each $n\in\N$, every branch point of $\Hull(V_n)$ is in $V_n$.
\item  We have that $\lim_{n\to\infty}\sup_{z\in T}\dist(z,V_n)=0$.
\end{enumerate}
Given a multiscale partition $\mathscr{V}=(V_n)_{n\in\N}$ of $T$, denote by $E_n$, for each $n\in\N$, the set of points $(x,y) \in V_n\times V_n$ such that $T[x,y]$ is the closure of a connected component of $\Hull(V_n)\setminus V_n$ and $x\in T[\rho_{\mathscr{V}},y]$.
\end{definition} 

Recall that given a closed subset $A$ of a tree $T$, the \emph{hull of $A$}, denoted by $\Hull(A)$ is the smallest connected subset of $T$ that contains $A$. Note that every tree has a multiscale partition, by considering appropriate level-nets.  %Note that $E_1=\emptyset$.

\begin{definition}\label{def:p-energyVS}
Let $T$ be an $\e$-snowtree, and denote by $C(T)$ the collection of continuous functions on $T$. Let $1 \leq p <+\infty$, $\mathscr{V}$ be a multiscale partition of $T$, $K\subset T$ be a continuum, and $f \in C(T)$. The discrete $p$-energy of $f$ on $K$ with respect to $\mathscr{V}$ is defined as
\[ \cE_{\mathscr{V},K}^p(f):=\sup_{n\in\N} \sum_{\substack{x,y \in V_n\cap K\\ (x,y)\in E_n}} \frac{|f(x)-f(y)|^p}{d(x,y)^{\tfrac{p-1}{\eps}}}. \]
Moreover, we denote by $\cW^{1,p}_{\mathscr{V}}(T)$ the collection of all functions $f\in C(T)$ with $\cE_{\mathscr{V},T}^{p}(f)<\infty$.
\end{definition}

Another way of defining Sobolev spaces on trees is via the \emph{Korevaar-Schoen} energy.

\begin{definition}
For $R>0$, $\a>0$, and $f\in C(T)$, define
$$E_{p,\a}(f,R):=\int_T\frac1{\mu(B(x,R))}\int_{B(x,R)}\frac{|f(y)-f(x)|^p}{R^{p\alpha}}\,d\mu(y)\,d\mu(x).$$
The \emph{Korevaar-Schoen(-Sobolev) space} on $T$ is defined as
$$ KS^{1,p}_{\alpha}(T):=\left\{f\in C(T):\limsup_{R\to0^+}E_{p,\a}(f,R)<\infty\right\}.$$
\end{definition}

In our main result we show that for a subclass of quasiconformal trees,  specifically for any Ahlfors regular $\e$-snowtree $T$, the space $\cW^{1,p}_{\mathscr{V}}(T)$ is a Banach space and independent of the choice of multiscale partition $\mathscr{V}$, as it coincides with $KS^{1,p}_{\alpha_p}(T)$ in a quantitative fashion for a specific value $\a_p$. The independence is new already for the case of the Vicsek fractal, which was studied in \cite{BaudChenVicsek} using a specific partition that heavily relies on the self-similarity of the space.

\begin{thm}\label{thm:discrete-KS-equivalence}
Let $(T,d,\mu)$ be a $Q$-Ahlfors regular $\eps$-snowtree, let $p\in (1,\infty)$, and let
$$\alpha_p := \frac Qp+\frac1\eps-\frac1{p\eps}.$$
For each multiscale partition $\mathscr{V}$ on $T$, we have
\begin{equation}\label{eq:mainthm}
\cE_{T,\mathscr{V}}^p(f)\simeq\limsup_{R\to0^+}E_{p,\a_p}(f,R),
\end{equation}
with constants depending only on $p$, $Q$, $\eps$ and the constant of Ahlfors regularity. In particular, $\cW^{1,p}_{\mathscr{V}}(T)=KS^{1,p}_{\alpha_p}(T)$.
\end{thm}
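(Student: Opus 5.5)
The plan is to reduce both sides of \eqref{eq:mainthm} to a common ``dyadic-scale'' quantity. A natural candidate is
\[
S(f):=\sup_{n}\sum_{(x,y)\in E_n}\frac{|f(x)-f(y)|^p}{d(x,y)^{(p-1)/\eps}},
\]
together with a Korevaar--Schoen type quantity computed along arcs. The basic tool is the snowflake structure. For $x,y$ in $T$, the arc $T[x,y]$ carries the measure $\mathcal H^{1/\eps}$, whose total mass is comparable to $d(x,y)^{1/\eps}$. Via the bi-H\"older equivalence of the Appendix, we may therefore parametrize every arc by $\mathcal H^{1/\eps}$-length. This makes $(T,\ell)$, with $\ell(x,y):=\mathcal H^{1/\eps}(T[x,y])\simeq d(x,y)^{1/\eps}$, a geodesic tree. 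The exponent $(p-1)/\eps$ is then exactly the one-dimensional $p$-energy scaling $|f(x)-f(y)|^p/\ell(x,y)^{p-1}$. The first key step is a Hölder/convexity lemma: for $z_0,\dots,z_k$ consecutive points along an arc,
\[
\frac{|f(z_0)-f(z_k)|^p}{\ell(z_0,z_k)^{p-1}}\le\sum_i\frac{|f(z_{i-1})-f(z_i)|^p}{\ell(z_{i-1},z_i)^{p-1}}.
\]
This gives monotonicity in $n$ of the sums defining $\cE^p_{\mathscr V,T}$, so the supremum is a limit. It also gives comparability of sums over any two fine partitions that interlace with bounded overlap.

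\textbf{Upper bound for KS by the discrete energy.} Fix $R$ small. Using Ahlfors regularity, $\mu(B(x,R))\simeq R^Q$, so
\[
E_{p,\a_p}(f,R)\simeq R^{-2Q-p\a_p}\iint_{d(x,y)<R}|f(x)-f(y)|^p\,d\mu\,d\mu .
\]
Choose $n$ so large that the edges of $E_n$ have diameter $\ll R$. I then write $|f(x)-f(y)|$ by a chain through the vertices of $V_n$ on $T[x,y]$, plus two boundary terms inside single edge-components. The chain term is controlled by the displayed Hölder inequality, giving a bound $\ell(x,y)^{p-1}\sum_{e\subset T[x,y]}(\text{edge energy})$. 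After Fubini, each edge $e$ of $\ell$-size $\lesssim R^{1/\eps}$ is counted with weight
\[
\mu\otimes\mu\{(x,y):d(x,y)<R,\ e\subset T[x,y]\}\cdot R^{(p-1)/\eps}\lesssim R^{2Q}R^{(p-1)/\eps}.
\]
Since $p\a_p=Q+(p-1)/\eps$, this is off by a factor $R^{Q}$. The key geometric input is therefore that the set of pairs whose arc crosses a small edge $e$ of diameter $r$ is much smaller: at most about $R^Q\cdot R^{Q}$ times the fraction contributed at scale $r$. In fact, I expect the edges must be handled at scale $\simeq R$ rather than $\ll R$. I therefore plan to work with the level-$n$ partition whose edges have diameter comparable to $R$, which requires first proving a partition-independence lemma: sums over any two partitions at comparable scales are comparable, using the Hölder lemma and the snowtree fact that only boundedly many edges of diameter $\simeq r$ meet a ball of radius $r$ (by Ahlfors regularity and a packing argument). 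With edges at scale $R$, the weights become $R^{2Q}\cdot R^{-Q-p\a_p}\cdot R^{(p-1)/\eps}\simeq1$. The boundary terms are handled by letting $R\to0$ and the continuity of $f$, via a Lebesgue-differentiation argument along arcs.

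\textbf{Lower bound.} For each edge $(x,y)\in E_n$ of size $\simeq R$, compare $|f(x)-f(y)|^p$ with averages of $f$ over small balls $B(x,\delta R)$ and $B(y,\delta R)$. This gives
\[
|f(x)-f(y)|^p\lesssim R^{-2Q}\iint_{B(x,\delta R)\times B(y,\delta R)}|f(u)-f(v)|^p
+\text{oscillation terms}.
\]
The oscillation terms are iterated over a geometric sequence of scales $\delta^k R$ and summed. Summing over the edges, by bounded overlap, yields a bound by $E_{p,\a_p}(f,R')$ for $R'\simeq R$. I then take $\limsup$.

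The main obstacle is that edges of $E_n$ need not have uniform size. Multiscale partitions only require mesh going to $0$, so within a level $V_n$ edge diameters can vary wildly. I would handle this by passing from $\mathscr V$ to an auxiliary ``scale-$r$'' partition. This partition consists of $V_n$ refined, together with maximal $r$-separated points along the hull, with coarse edges merged. Comparability with $\cE^p_{\mathscr V}$ follows from the Hölder lemma in one direction and from monotonicity of the sup in the other.
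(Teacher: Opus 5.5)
\textbf{There is a genuine gap in your upper bound (Korevaar--Schoen energy $\lesssim$ discrete energy).} It starts with a normalization slip. Since $\mu(B(x,R))\simeq R^Q$, one has $E_{p,\alpha_p}(f,R)\simeq R^{-Q-p\alpha_p}\iint_{d(x,y)<R}|f(x)-f(y)|^p\,d\mu\,d\mu$, with $R^{-Q}$ and not $R^{-2Q}$; you use the correct factor yourself a few lines later.

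With the correct factor, your first fine-scale count already closes. Because $p\alpha_p=Q+(p-1)/\eps$, every edge receives weight $R^{-Q-p\alpha_p}\cdot R^{2Q}\cdot R^{(p-1)/\eps}=1$. So there is no factor $R^Q$ to recover. The extra smallness you postulate also does not exist: on an interval, the pairs with $|x-y|<R$ whose segment contains a fixed tiny edge have measure $\simeq R^2=R^{2Q}$.

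What actually fails is the replacement argument at scale $R$. With edges of size $\simeq R$ and $d(x,y)<R$, the arc $T[x,y]$ contains only $O(1)$ vertices. The ``boundary terms'' $|f(x)-f(x')|$ with $d(x,x')\simeq R$ are then the whole quantity, not an error term. Continuity or Lebesgue differentiation only gives $|f(x)-f(x')|\le\omega_f(CR)$, where $\omega_f$ is the modulus of continuity of $f$. What you need is $\int_T|f(x)-f(x')|^p\,d\mu(x)\lesssim R^{Q+(p-1)/\eps}\cE^p_{\mathscr{V},T}(f)$. Moreover, $\mu$-a.e.\ $x$ may lie off the skeleton $S$ altogether, as in the Vicsek example. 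Such a bound requires a quantitative Morrey estimate along arcs followed by Fubini. That is exactly the paper's route: Proposition~\ref{prop: Morrey-Sobolev}, built on the arcwise gradient of Proposition~\ref{prop: discrete-gradient}, is fed into Proposition~\ref{prop:discrete-to-KS}.

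The partition-independence lemma you would need is also false as stated. Take a sawtooth of period $2r$ on an interval: the sum over the grid of multiples of $r$ is large, while the sum over its shift by $r/2$ vanishes.

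\textbf{The easiest repair is to keep your first plan.} Fix $R$, set $\eta_n:=\sup_z\dist(z,V_n)$, and let $n\to\infty$.
\begin{itemize}
\item Every $E_n$-edge has diameter $\lesssim\eta_n$, because the arc from the $\ell$-midpoint of an edge to its nearest vertex must pass through an endpoint of that edge.
\item Every component of $T\setminus S_n$ has diameter $\lesssim\eta_n$, because arcs from it to $V_n$ pass through its attaching point.
\end{itemize}
Let $x',y'$ be the first and last $V_n$-vertices on $T[x,y]$. Then $|f(x)-f(y)|\le|f(x')-f(y')|+2\omega_f(C\eta_n)$. Your H\"older chain over the full edges in $T[x',y']\subset T[x,y]$, plus Fubini, then gives $E_{p,\alpha_p}(f,R)\lesssim\cE^p_{\mathscr{V},T}(f)$ for every small $R$. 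This is a more elementary substitute for the paper's argument, needing neither the Mazur-lemma gradient nor Morrey's inequality.

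\textbf{Your lower bound is a genuinely different route from the paper's.} Proposition~\ref{prop:KS-to-discrete} uses the partition of unity $\phi_i^R$, the mollifications $f_k$, a weak limit of their arcwise derivatives $h_k$, and Proposition~\ref{prop: discrete-gradient}. Your telescoping route can be made to work with two corrections.
\begin{itemize}
\item Do not merge edges. Take $r$ below the shortest $E_n$-edge and only subdivide, so that your H\"older lemma gives $\cE^p_{V_n,T}(f)\lesssim$ the subdivided sum; merging can only lose energy.
\item Bound the oscillation layers by a supremum over scales. The $k$-th layer, summed over the $cr$-separated vertices, is $\lesssim(\delta^k r)^{(p-1)/\eps}E_{p,\alpha_p}(f,2\delta^k r)$. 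After dividing by $r^{(p-1)/\eps}$ you get a geometric series bounded by $\sup_{s\le Cr}E_{p,\alpha_p}(f,s)$, not by a single $E_{p,\alpha_p}(f,R')$.
\end{itemize}
This yields $\cE^p_{\mathscr{V},T}(f)\lesssim\limsup_{R\to0^+}E_{p,\alpha_p}(f,R)$, which is enough for the theorem. The paper's single-scale mollification gives the stronger bound with $\liminf$.
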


It should be noted that the above result is novel even in the case of $\eps=1$, and in particular in the case of geodesic trees. 

Our next result is the attainment of the Korevaar-Schoen critical exponent. This exponent plays a crucial role in Besov interpolation \cite{BaudChenVicsek, BaudLocalGeodTrees} and  in determining the proper probabilistic framework for the walk dimension of the underlying space (see for instance \cite{JonssonCriticalExpWalkDim, SobolevLaaksoSylv}).

\begin{thm}\label{thm: critical-KS-exponent}
Suppose $(T,d,\mu)$ is a $Q$-Ahlfors regular $\eps$-snowtree. Then
$$\alpha_p=\sup\left\{\alpha\ge0: KS^{1,p}_\alpha(T)\text{ contains a nonconstant function}\right\}.$$
Moreover, $KS^{1,p}_{\alpha_p}(T)$ contains a nonconstant function.
%In particular,
%\begin{enumerate}
%\item if $0\le \alpha\le \alpha_p$, then $KS^{1,p}_\alpha(T)$ contains a nonconstant function;
%\item if $\alpha>\alpha_p$, then every function in $KS^{1,p}_\alpha(T)$ is constant.
%\end{enumerate}
\end{thm}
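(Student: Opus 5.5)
We split the statement into two parts: there is a nonconstant function in $KS^{1,p}_{\alpha_p}(T)$, and for $\alpha>\alpha_p$ every $f\in KS^{1,p}_\alpha(T)$ is constant. Since $E_{p,\alpha}(f,R)=R^{p(\alpha_p-\alpha)}E_{p,\alpha_p}(f,R)$, we have $KS^{1,p}_{\alpha}(T)\supseteq KS^{1,p}_{\alpha_p}(T)$ for $\alpha\le\alpha_p$. So the first part gives nonconstant functions for every $\alpha\le\alpha_p$, and the supremum equals $\alpha_p$ once the second part is proved.

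\textbf{Existence.} By Theorem~\ref{thm:discrete-KS-equivalence} it suffices to build a nonconstant $f\in C(T)$ with $\cE^p_{\mathscr V,T}(f)<\infty$ for some multiscale partition $\mathscr V$. Fix an arc $\gamma=T[a,b]$ and a retraction $\pi:T\to\gamma$ sending $z$ to the nearest point of $\gamma$. Define $g$ on $\gamma$ by $g(z)=\mathcal H^{1/\eps}(T[a,z])$ and set $f=g\circ\pi$. Then $f$ is continuous and nonconstant. For an edge $(x,y)\in E_n$, either $T[x,y]$ meets $\gamma$ only at a point, so $f(x)=f(y)$, or $\pi(T[x,y])=T[\pi x,\pi y]$ is a subarc of $\gamma$. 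In the latter case $|f(x)-f(y)|\le\mathcal H^{1/\eps}(T[\pi x,\pi y])\lesssim d(x,y)^{1/\eps}$. Hence the edge term is bounded by $d(x,y)^{p/\eps-(p-1)/\eps}=d(x,y)^{1/\eps}\simeq\mathcal H^{1/\eps}(T[x,y])$. The edges of $E_n$ have disjoint interiors, so the edges meeting $\gamma$ in more than a point carry disjoint subarcs of $\gamma$. Summing gives a bound by $C\,\mathcal H^{1/\eps}(\gamma)<\infty$, uniformly in $n$. The edges only partially contained in $\gamma$ need a little care: each such edge is a single arc, and its $\gamma$-part has measure comparable to the whole $\mathcal H^{1/\eps}(T[x,y])$ only up to the snowflake constants. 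We handle this by bounding the edge term by $\mathcal H^{1/\eps}(T[x,y]\cap\gamma)$ through the snowflake property applied to $T[\pi x,\pi y]$.

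\textbf{Rigidity for $\alpha>\alpha_p$.} Let $f\in KS^{1,p}_\alpha(T)$, so $E_{p,\alpha_p}(f,R)=R^{p(\alpha-\alpha_p)}E_{p,\alpha}(f,R)\to0$ as $R\to 0^+$. By the lower bound in \eqref{eq:mainthm}, $\cE^p_{\mathscr V,T}(f)\lesssim\limsup_{R\to 0^+}E_{p,\alpha_p}(f,R)=0$. Therefore $f(x)=f(y)$ for every edge of every $E_n$. Every pair of points of $V_n$ is joined by a chain of edges in $\Hull(V_n)$, so $f$ is constant on $\bigcup_n V_n$. This set is dense by property (3) of multiscale partitions, and $f$ is continuous, so $f$ is constant.

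\textbf{Main obstacle.} This step rests on the lower inequality of Theorem~\ref{thm:discrete-KS-equivalence} holding with uniform constants, which we take as given. The only real work is the existence estimate, namely the careful summation over edges that meet $\gamma$ only partially.
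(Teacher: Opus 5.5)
Your proof is correct and is essentially the paper's argument: the same test function $z\mapsto\mathcal{H}^{1/\eps}(T[a,\pi z])$, the same scaling identity between $E_{p,\alpha}$ and $E_{p,\alpha_p}$, and Theorem~\ref{thm:discrete-KS-equivalence} in both directions. The only differences are that you bound the discrete energy edge by edge, and that you get constancy from vanishing edge differences plus density, where the paper uses Lemma~\ref{lem:arcwise-representatives}, Proposition~\ref{prop: discrete-gradient} and Proposition~\ref{prop: Morrey-Sobolev}; your edge bound is right, since $|f(x)-f(y)|=\mathcal{H}^{1/\eps}(T[\pi x,\pi y])\le C_T\, d(x,y)^{1/\eps}$ makes each term at most $C_T^{p-1}\mathcal{H}^{1/\eps}(T[x,y]\cap\gamma)$.

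Two corrections. Since $d$ is not geodesic, $\pi z$ should be defined as the unique point of $\gamma$ with $T[z,\pi z]\cap\gamma=\{\pi z\}$, as in the paper, not as the $d$-nearest point, which need not be unique or agree with it. Also, your remark that the $\gamma$-part of an edge is comparable to the whole edge is false in general, but your final bound by $\mathcal{H}^{1/\eps}(T[x,y]\cap\gamma)$ does not use it.
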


Furthermore, we emphasize the resemblance of $p\alpha_p$ to a type of ``$p$-walk dimension'' by proving appropriate capacity estimates in the discrete energy setting, similarly to the work of \cite{SobolevLaaksoSylv} on Laakso-graphs. 

Let $(T,d)$ be a tree, let $x\in T$, let $\mathscr{V}$ be a multiscale partition of $T$, and let $r>0$, $A>1$, and $p\geq 1$. The discrete energy $p$-capacity of the annulus $B(x,Ar)\setminus B(x,r)$ is defined to be
\begin{align*}
\operatorname{Cap}_{p,\mathscr{V}}&(x,r,A)\\  
&:=\inf\left\{\cE_{\mathscr{V},T}^p(f): f\in\cW^{1,p}_{\mathscr{V}}(T),\ f|_{B(x,r)}=1,\ f|_{T\setminus B(x,Ar)}=0 \right\}.
\end{align*}

\begin{thm}\label{thm: upper cap bd}
Let $(T,d,\mu)$ be a $Q$-Ahlfors regular $\eps$-snowtree, let $\mathscr{V}$ be a multiscale partition, and let $1<p<\infty$. There exists $A>1$ such that for every $x\in T$ and $0<r<\infty$, 
%there exists $f\in\cW^{1,p}_{\mathscr{V}}(T)$ satisfying $f=1$ on $B(x,r)$, $f=0$ on $T\setminus B(x,Ar)$, and
%$$ \cE_{\mathscr{V},T}^p(f) \lesssim\frac{\mu(B(x,r))}{r^{p\alpha_p}}.$$
%In particular,
\begin{equation}\label{eq: cap upper estimate}
\operatorname{Cap}_{p,\mathscr{V}}(x,r,A)\lesssim\frac{\mu(B(x,r))}{r^{p\alpha_p}},
\end{equation} 
and the capacity is attained.
\end{thm}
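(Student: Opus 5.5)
We prove Theorem \ref{thm: upper cap bd} by building an explicit competitor and bounding its discrete energy directly. Existence of a minimizer then follows from a compactness and lower semicontinuity argument.

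\textbf{Reduction.} By $Q$-Ahlfors regularity, $\mu(B(x,r))\simeq r^Q$ for $r\lesssim\diam T$. We compute
\[
p\alpha_p = Q+\frac{p-1}{\eps},
\qquad\text{so}\qquad
\frac{\mu(B(x,r))}{r^{p\alpha_p}}\simeq r^{-(p-1)/\eps}.
\]
For $r\ge \diam T$ the constant function $1$ is admissible with zero energy. So it suffices to produce $f$ with $f=1$ on $B(x,r)$, $f=0$ off $B(x,Ar)$, and $\cE^p_{\mathscr V,T}(f)\lesssim r^{-(p-1)/\eps}$.

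\textbf{Construction.} Let $K=\overline{B}(x,r)$ and $h=\Hull(K)$; $h$ is connected and lies within distance $\lesssim r$ of $x$. Take $\phi(z)=\dist(z,h)$, measured in the snowflake metric $\rho(z,w)=\mathcal H^{1/\eps}(T[z,w])^{\eps}\simeq d(z,w)$. By the snowtree condition, $\rho^{1/\eps}$ is additive along arcs, so $\rho^{1/\eps}$ is a geodesic tree metric; call it $\ell$. Define
\[
f(z)=\max\Bigl(0,\,1-\frac{\ell(z,h)}{(cr)^{1/\eps}}\Bigr).
\]
Then $f$ is $1$ on $K$ and vanishes when $\ell(z,h)\ge (cr)^{1/\eps}$, which forces $d(z,x)\le Ar$ for suitable $A$. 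Moreover $f$ is $(cr)^{-1/\eps}$-Lipschitz with respect to $\ell$.

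\textbf{Energy bound.} Fix $n$ and an edge $(y,y')\in E_n$. Then
\[
|f(y)-f(y')|\le (cr)^{-1/\eps}\,\ell\bigl(T[y,y']\cap S\bigr),
\]
where $S$ is the support region $\{0<\ell(\cdot,h)<(cr)^{1/\eps}\}$. Also $d(y,y')^{1/\eps}\simeq \ell(y,y')$. Write $a=\ell(T[y,y']\cap S)\le \ell(y,y')$. Then
\[
\frac{|f(y)-f(y')|^p}{d^{(p-1)/\eps}}\lesssim r^{-p/\eps}\,\frac{a^p}{\ell(y,y')^{p-1}}\le r^{-p/\eps}\,a.
\]
The edges in $E_n$ have disjoint interiors. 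Hence summing $a$ over them is at most the $\ell$-length of $S$ contained in $\Hull(V_n)$, which is $\mathcal H^{1/\eps}$ of that set.

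\textbf{Main obstacle.} The difficulty is bounding this length by $r^{1/\eps}$, since $S$ may contain many branches. This is where we need more than Ahlfors regularity: we need the total $\ell$-length of $\Hull(V_n)\cap S$, uniformly in $n$, which could be infinite for a branching tree. To fix this we instead make $f$ vary only along the single arc from $h$ toward the root, or along finitely many arcs. Concretely, define
\[
f(z)=g\bigl(\pi(z)\bigr),
\]
where $\pi$ is the projection of $z$ onto an arc $\gamma$ starting at $h$, and $g$ is linear in $\ell$ on $\gamma$. Choose $f=1$ on the component of $T\setminus\{\text{point}\}$ containing $K$ and $f=0$ beyond. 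The difficulty is that a tree ball can have many exit points. Ahlfors regularity with the snowtree property bounds the number of components of $T\setminus B(x,Ar)$ meeting $B(x,2r)$ — boundedly many, by a packing argument. So $f$ is a minimum of boundedly many such one-arc profiles. Each profile has energy at most $r^{-p/\eps}\cdot r^{1/\eps}=r^{-(p-1)/\eps}$, since only edges on one arc of $\ell$-length $\simeq r^{1/\eps}$ contribute. I expect this counting/packing step to be the crux.

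\textbf{Attainment.} Take a minimizing sequence $f_k$. Truncate to $[0,1]$, which does not increase the energy. The energy bound gives a uniform Hölder-type modulus on $\bigcup_n V_n$, using the tree structure and summing along edges with Hölder's inequality. Arzelà–Ascoli then gives a uniform limit $f$ satisfying the boundary conditions. Each finite sum in $\cE$ is continuous under pointwise convergence, so the supremum is lower semicontinuous, and $f$ is a minimizer.
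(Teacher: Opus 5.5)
Your final construction follows the paper's route. The cutoff $\Psi_x^R=1-\max_{u\in U(x,R)}b_u/\mathcal{H}^{1/\eps}(T[x,u])$ of Lemma~\ref{lem:snowtree-cutoff} is exactly a minimum of boundedly many one-arc profiles: each is the projection onto $T[x,u]$ composed with a function linear in arc length. Your attainment argument also matches the paper's. Two of your variations are fine and slightly simpler: the H\"older estimate along an $E_m$-chain replaces Proposition~\ref{prop: Morrey-Sobolev}, and bounding the energy of a minimum edgewise by the sum of the profiles' energies bypasses Lemma~\ref{lem:arcwise-representatives}.

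The gap is in the step you yourself call the crux, and as written it does not work. Literally, for $A>2$ no component of $T\setminus B(x,Ar)$ meets $B(x,2r)$, so nothing is being counted. More substantively, a profile $g\circ\pi_\gamma$ vanishes only on the subtree behind the \emph{endpoint} of $\gamma$. On a side branch leaving $\gamma$ at an interior point $b$ it equals $g(b)\in(0,1)$. So one arc per far-reaching branch fails as soon as that branch splits again inside the annulus. The arcs must be chosen so that every point of $T\setminus B(x,Ar)$ lies behind the endpoint of one of them, and their number must still be bounded. Defining exits through $d$-balls of a fixed ratio such as $2$ is also unsafe, because $d(x,\cdot)$ need not be monotone along arcs of a snowtree. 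If $C_T^2\ge 2^{1/\eps}$, a point of $\partial B(x,2r)$ may lie in $\Hull(\overline B(x,r))$ or have points of $\overline B(x,r)$ behind it.

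The repair is to count exits on a level set of $\ell$. Let $h=\Hull(\overline B(x,r))$, and let $\pi_h(z)$ be the point where $T[z,x]$ first meets $h$. Let $E$ be the set of points $e$ with $\ell(e,h)=r^{1/\eps}$ for which some $z$ satisfies $e\in T[\pi_h(z),z]$ and $\ell(e,z)\ge r^{1/\eps}$. Set $f=\min_{e\in E}g_e\circ\pi_{\gamma_e}$, where $\gamma_e=T[\pi_h(e),e]$ and $g_e$ decreases linearly in $\ell$ from $1$ to $0$ along $\gamma_e$.

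\emph{Boundary values.} Since $h\subset\{\ell(x,\cdot)\le C_Tr^{1/\eps}\}$, for $A$ large (depending on $C_T,\eps$) every $z\notin B(x,Ar)$ has $\ell(z,h)>2r^{1/\eps}$. Hence $z$ lies behind the point of $T[\pi_h(z),z]$ at $\ell$-distance $r^{1/\eps}$ from $h$, and that point belongs to $E$, so $f(z)=0$. Every point of $h\supset\overline B(x,r)$ projects to the initial point of each $\gamma_e$, so $f=1$ there.

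\emph{Energy.} Each $\gamma_e$ has $\ell$-length $r^{1/\eps}$, so each profile has energy $\lesssim r^{-(p-1)/\eps}$.

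\emph{Count.} Distinct $e,e'\in E$ sit at the same $\ell$-level, so neither lies on the arc from $h$ to the other. Choose $w_e$ behind $e$ with $\ell(e,w_e)=r^{1/\eps}$. Then $T[w_e,w_{e'}]$ contains the disjoint arcs $T[w_e,e]$ and $T[e',w_{e'}]$, so the $w_e$ are $\gtrsim r$-separated in $d$. They also lie in $B(x,Cr)$, so Ahlfors regularity bounds $\card E$.

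This is the paper's bound on $\card U(x,R)$. Working on an $\ell$-level rather than on the $d$-sphere $\partial B(x,R)$ makes the non-nesting of exit points automatic, a property the paper's separation argument relies on implicitly.
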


Due to Theorem \ref{thm:discrete-KS-equivalence}, this also yields a similar upper estimate for the Korevaar-Schoen capacity; see Corollary \ref{cor: upper cap KS}. 

The paper is organized as follows. In Section~\ref{sec:discrete energies} we  prove an arc-wise derivative representation of the discrete energy, as well as a Morrey-Sobolev estimate. Section~\ref{sec: KS same discrete} proves the equivalence between discrete energy Sobolev spaces and Korevaar--Schoen spaces by comparing the respective energies. In Section \ref{sec: crit exp cap} we identify the critical Korevaar--Schoen exponent and prove the capacity upper bound and attainment. In Section \ref{sec:remarks} we list some open questions and further directions. Finally, in Appendix~\ref{app} we show that $\eps$-snowtrees are exactly the $\frac1{\e}$-bi-H\"older images of geodesic trees.

\subsection*{Acknowledgments} The authors wish to thank Sylvester Eriksson-Bique for the interesting discussions on the topic.

\subsection{Background and notation}\label{sec: backg}
A metric measure space $(X,d,\mu)$ is called \emph{$Q$-Ahlfors regular}, for some $Q>0$, if there is $C_A>0$ such that
\[C_A^{-1}R^Q \leq \mu(B(x,R)) \leq C_A R^{Q},\]
for all $x\in T$ and $0<R<\diam{T}$.

Given two non-negative quantities $A$ and $B$, we write $A\lesssim B$ if there is a \textit{comparability constant}  $C=C(\lesssim)$ such that $A\leq C B$. Similarly,  we write $A\gtrsim B$ if there is $C=C(\gtrsim)$ such that $A\geq  B/C$. If $A\lesssim B$ and $A\gtrsim B$ we write $A\simeq B$.

%Given $x,y\in T$ in a metric tree $(T,d)$, we denote by $T[x,y]$ the unique simple closed arc connecting $x$ to $y$, and we follow similar notation for half-open and open arcs.  

\section{Discrete energies on trees}\label{sec:discrete energies}

Fix for this section an $\e$-snowtree $(T,d)$, and a multiscale partition $\mathscr{V}=(V_n)_{n\in\N}$. The main goal of this section is to establish the following proposition. 

\begin{prop}\label{prop: discrete-gradient}
Let $p\in (1,\infty)$ and $f \in C(T)$. Then $f \in \cW^{1,p}_{\mathscr{V}}(T)$ if and only if there exists $g\in L^p(T,\mathcal{H}^{1/\e})$ such that for every $n \in \N$,
\begin{equation}\label{eq:FTC}
f(y)-f(x)=\int_{T[x,y]} g \,  d\mathcal{H}^{1/\e}, \quad\text{for every $(x,y)\in E_n$.}
\end{equation}
Moreover, if $K \subset T$ is connected and $f \in \cW^{1,p}_{\mathscr{V}}(T)$, then 
$$ \cE_{\mathscr{V},K}^p(f)\simeq \int_{K} |g|^p \, d\mathcal{H}^{1/\e}.$$
\end{prop}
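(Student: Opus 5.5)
The plan is to reduce the statement to the classical one–dimensional characterization of $W^{1,p}$ on an interval, due to F.~Riesz: a function $F$ on $[0,L]$ is absolutely continuous with $F'\in L^p$ if and only if
\[
\sup \sum_i \frac{|F(t_i)-F(t_{i-1})|^p}{(t_i-t_{i-1})^{p-1}}<\infty ,
\]
where the supremum is over partitions $0=t_0<\dots<t_k=L$. Moreover, the supremum equals $\int_0^L|F'|^p$, and it is the limit along any sequence of partitions whose mesh tends to $0$.

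Write $\nu=\mathcal{H}^{1/\e}$ and, for $x,y\in T$, $\lambda(x,y):=\nu(T[x,y])$. By Definition~\ref{defn:snowtrees}, $\lambda(x,y)\simeq d(x,y)^{1/\e}$. Hence, up to constants depending only on $p$ and the snowtree constant, we may replace $d(x,y)^{(p-1)/\e}$ by $\lambda(x,y)^{p-1}$ in the definition of $\cE^p_{\mathscr V,K}$. With $\lambda$ in place of $d^{1/\e}$, the energy has an exact additive structure. If $(x,y)\in E_n$ and $m>n$, then $\Hull(V_m)\cap T[x,y]$ equals $T[x,y]$. The points of $V_m$ on $T[x,y]$ cut it into consecutive edges of $E_m$, because every branch point of $\Hull(V_m)$ lies in $V_m$. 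Since $\nu$ is additive on subarcs, H\"older's inequality shows that the $\lambda$-version of the level-$m$ sum over these subedges dominates the level-$n$ term. Thus the $\lambda$-energies are nondecreasing in $n$, and the supremum is a limit.

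\emph{Forward direction.} Fix $(x,y)\in E_n$ and parametrize $T[x,y]$ by $u(z)=\lambda(x,z)\in[0,\lambda(x,y)]$. This is a homeomorphism that pushes $\nu|_{T[x,y]}$ forward to Lebesgue measure, since $\nu$ of a subarc is the difference of the $u$-values. Put $F=f\circ u^{-1}$.

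The key geometric point is that $V_m\cap T[x,y]$ becomes dense in $T[x,y]$, in the $u$-parametrization, as $m\to\infty$. To see this, take $z\in T[x,y]$ and $v\in V_m$ with $d(z,v)$ small. The first point $w$ of $T[v,z]$ on $T[x,y]$ is either $v$ itself or a branch point of $\Hull(V_m)$, so in both cases $w\in V_m$. Moreover, $\lambda(w,z)\le \nu(T[v,z])\simeq d(v,z)^{1/\e}$. So the level-$m$ partitions of $[0,\lambda(x,y)]$ have mesh tending to $0$, and Riesz's theorem gives $F\in W^{1,p}$ with $\int|F'|^p$ equal to the limit of the partial $\lambda$-energies. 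We then define $g:=F'\circ u$ on $T[x,y]$, with the sign oriented away from the root. This definition is consistent across levels because edges of $E_m$ subdivide edges of $E_n$, and arcs of $E_n$ meet only at endpoints. On $T\setminus\Hull(\bigcup_n V_n)$ we set $g=0$. Then \eqref{eq:FTC} holds, and $\int_T|g|^p\,d\nu\simeq\cE^p_{\mathscr V,T}(f)<\infty$.

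\emph{Converse direction.} H\"older's inequality gives
\[
|f(y)-f(x)|^p\le \lambda(x,y)^{p-1}\int_{T[x,y]}|g|^p\,d\nu .
\]
Summing over the nonoverlapping arcs of $E_n$ that lie in $K$ gives $\cE^p_{\mathscr V,K}(f)\lesssim\int_K|g|^p\,d\nu$.

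For the reverse inequality on a connected $K$, the edges of $E_n$ with both endpoints in $K$ exhaust $K\cap\Hull(V_n)$ except for boundedly many partial edges at the "ends" of $K$. By the density argument above, these partial pieces shrink to $\nu$-null sets as $n\to\infty$. Combined with monotonicity and Riesz on each arc, this yields $\int_K|g|^p\,d\nu\lesssim\cE^p_{\mathscr V,K}(f)$, since $g=0$ off the hull.

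I expect the main obstacle to be this last exhaustion step together with the bookkeeping for $g$. One must show that the set of points of $K\cap\Hull(\bigcup V_n)$ not covered by full edges inside $K$ has vanishing $\int|g|^p$. One must also check that $g$ is a well-defined $\nu$-measurable function. The second point should follow from the countable union of arcs and the disjointness of edge interiors.
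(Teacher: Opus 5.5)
Your proof is correct, but it takes a genuinely different route from the paper's. The paper never parametrizes arcs. It takes the piecewise-constant difference quotients $F_n$ on the edges of $E_n$, notes that they are bounded in $L^p(S,\mathcal{H}^{1/\e})$, and uses reflexivity and Mazur's lemma to get convex combinations converging strongly to some $g$. By telescoping, $\int_{T[x,y]}F_j\,d\mathcal{H}^{1/\e}=f(y)-f(x)$ for $(x,y)\in E_k$ and all $j\ge k$, so \eqref{eq:FTC} passes to the limit. This global compactness argument produces $g$ with no arc geometry and no density of $V_m$ along arcs, which is why the paper can also state it for general measures $\nu$.

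Your arc-by-arc reduction to Riesz's theorem gives more structure. It identifies $g$ as the $\mathcal{H}^{1/\e}$-arclength derivative of $f$ on each edge, so $g$ is unique a.e.\ on $S$. It shows the $\lambda$-normalized energies are nondecreasing in $n$, and that $\int_S|g|^p\,d\mathcal{H}^{1/\e}$ equals their limit exactly, with $\simeq$ entering only through $\lambda\simeq d^{1/\e}$. Your first-point lemma (the first point of $T[v,z]$ on $T[x,y]$ lies in $V_m$) is also exactly what the localized bound on $K$ needs. The paper controls $\int_{K\cap S_n}|F_n|^p$ by a sum over all edges meeting $K$ and then by $\cE^p_{\mathscr V,K}(f)$, without accounting for edges that have an endpoint outside $K$.

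To make your exhaustion step precise, note that the partial edges are boundedly many only for fixed $n$. So fix $n$ and refine to level $m\to\infty$ inside $K\cap S_n$. There are at most two partial level-$m$ subedges per edge of $E_n$, each of $\mathcal{H}^{1/\e}$-length at most the level-$m$ mesh, so their contribution to $\int|g|^p$ vanishes by absolute continuity. Then let $n\to\infty$ by monotone convergence. Similarly, deducing $F\in W^{1,p}$ from bounded sums along your particular partitions uses the mesh-to-zero form of Riesz's theorem. That form follows from monotonicity under refinement together with continuity of $F$: approximate an arbitrary partition by level-$m$ points.
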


Before we start the proof a few remarks are in order. First, the function $g$ need not be defined on the entire tree $T$ but rather on a subset of $T$ determined by $\mathscr{V}$. More precisely, set $S_n = \Hull(V_n)$ for each $n\in\N$, and note that $S_1\subset S_2 \subset \cdots \subset T$. Let $S = \bigcup_{n\in\N}S_n$, which we call the \textit{skeleton} of $T$ induced by $\mathscr{V}$. In general, $S$ may be a lot smaller than $T$; e.g. if $T$ is the Vicsek tree, for the particular multiscale partition considered in \cite{BaudChenVicsek} we have $\mathcal{H}^{\log{5}/\log{3}}(T)>0$ while $\mathcal{H}^{\log{5}/\log{3}}(S)=0$. With this notation, in Proposition \ref{prop: discrete-gradient}, we only require that $g\in L^p(S,\mathcal{H}^{1/\e})$.

Second, a more general version of Proposition \ref{prop: discrete-gradient} holds for all metric trees. More precisely, let $(T,d)$ be a metric tree. Assume that there exists a measure $\nu$ on the Borel $\sigma$-algebra $\mathcal{B}(T)$ such that 
\[0 < \nu(T(x,y)) < \infty, \quad\text{for all distinct $x,y\in T$}.\]
Note that $T$ possesses at least one such measure, due to Nadler's embedding theorem \cite{Nadler92}. More specifically, Nadler proved that every metric tree topologically embeds into a geodesic tree, and so $T$ is homeomorphic to a geodesic tree. Therefore, one can define $\nu$ to be the push-forward measure of $\mathcal{H}^1$ on the geodesic tree that is homeomorphic to $T$. If we define the discrete $p$-energy  of $f\in C(T)$ with respect to a multiscale partition $\mathscr{V}$ as
\[ \cE_{\mathscr{V},\nu,K}^p(f):=\sup_{n\in\N} \sum_{\substack{x, \,y \in V_n\cap K\\ (x,y)\in E_n}} \frac{|f(x)-f(y)|^p}{\nu(T[x,y])^{p-1}}, \]
then Proposition \ref{prop: discrete-gradient} holds verbatim with $\mathcal{H}^{1/\e}$ replaced by $\nu$.

\begin{proof}[Proof of Proposition \ref{prop: discrete-gradient}]
Given $n\in\N$ and a continuum $K\subset T$, let
\begin{equation}\label{eq:discreteenergy}
\cE_{V_n,K}^p(f):=\sum_{\substack{x, \,y \in V_n\cap K\\ (x,y)\in E_n}} \frac{|f(x)-f(y)|^p}{d(x,y)^{\tfrac{p-1}{\eps}}}. 
\end{equation}

Assume first that there exists $g\in L^p(T,\mathcal{H}^{1/\e})$ such that \eqref{eq:FTC} is true for all $n\in\N$. Let $K\subset T$ be a connected set, and let $n\in \N$. Applying \eqref{eq:FTC} and H\"older inequality, we have
\begin{align*} 
\cE_{V_n,K}^p(f) &=\sum_{\substack{x,y \in V_n\cap K\\ (x,y)\in E_n}} \frac{|f(x)-f(y)|^p}{d(x,y)^{\tfrac{p-1}{\eps}}}\\
&\lesssim \sum_{\substack{x,y \in V_n\cap K\\ (x,y)\in E_n}} \frac{(\int_{T[x,y]} |g| \,  d\mathcal{H}^{1/\e})^p}{d(x,y)^{\tfrac{p-1}{\eps}}}\\
&\leq \sum_{\substack{x,y \in V_n\cap K\\ (x,y)\in E_n}} \frac{(\mathcal{H}^{1/\e}(T[x,y]))^{p-1}\int_{T[x,y]} |g|^p \,  d\nu}{d(x,y)^{\tfrac{p-1}{\eps}}}.
\end{align*}
Recalling that $\mathcal{H}^{1/\eps}([x,y])\simeq d(x,y)^{1/\eps}$, that $S_n\subset S_{n+1}$, and the essential pair-wise disjointedness of arcs $T[x,y]$ for various $(x,y)\in E_n$, we obtain
$$\cE_{V_n K}^p(f)\lesssim\sum_{\substack{x,y \in V_n\cap K\\ (x,y)\in E_n}} \int_{T[x,y]} |g|^p \,  d\mathcal{H}^{1/\e}\leq \int_{S\cap K} |g|^p \, d\mathcal{H}^{1/\e}.$$
By taking supremum over all $n\in \N$ and using the hypothesis that $g\in L^p(S,\mathcal{H}^{1/\e})$, this direction is complete.

For the rest of the proof, assume that $f \in \cW^{1,p}_{\mathscr{V}}(T)$. Our goal is to construct a sequence of functions on $T$ with gradients in $L^p(S,\mathcal{H}^{1/\e})$ which converge to the desired $g$ in \eqref{eq:FTC}.

For each $n\in\N$, let $F_n: T\rightarrow\R$ be a piecewise constant function defined by
$$
F_n(z) =
\begin{cases}
\frac{f(x_{i+1})-f(x_{i})}{\mathcal{H}^{1/\e}(T[x_i,x_{i+1}])}, & \text{if $z\in T(x_i,x_{i+1})$ with $(x_i,x_{i+1})\in E_n$}\\
%w_z, & \text{if $z\in V_n$},\\
0, & \text{else}.
\end{cases}
$$ 
%where $w_z$ is chosen for every $z\in V_n$ as a fixed number from the set of values $\{F_n\vert_{a}: a\in A_n(z)\}$. 
Thus, we have a sequence of functions $(F_n)_n$ such that
\begin{align*}
\sup_n \|F_n\|^p_{L^p(K \cap S,\mathcal{H}^{1/\e})}
&= \sup_n \int_{K \cap S} |F_n|^p \, d\mathcal{H}^{1/\e} \\
&= \sup_n \int_{K \cap S_n} |F_n|^p \, d\mathcal{H}^{1/\e} \\
&\leq \sup_n \sum_{\substack{(x,y)\in E_n\\ T[x,y] \cap K \neq \emptyset}} \left| \frac{f(x) - f(y)}{\mathcal{H}^{1/\e}(T[x,y])} \right|^p \mathcal{H}^{1/\e}(T[x,y]) \\
&\lesssim  \mathcal{E}_{\mathscr{V},K}^p(f) \\
&< \infty.
\end{align*}
This implies that $(F_n)_n$ is a bounded sequence in $L^p(S,\mathcal{H}^{1/\e})$ which is a reflexive space. By Mazur lemma,  there is a sequence $(g_k)_k\subset L^p(S,\mathcal{H}^{1/\e})$ that converges to some $g\in L^p(S,\mathcal{H}^{1/\e})$ in the $p$-norm, with 
    \begin{equation}\label{eq: Mazur seq convex}
        g_k=\sum_{j=k}^{M_k} \lambda_j^k F_j, \quad \lambda_j^k\geq 0, \quad \sum_{j=k}^{M_k} \lambda_j^k=1.
    \end{equation}

We show that this $g$ is the desired function. Let $1\leq k \leq n$ be integers and let $(x,y)\in E_k$. Then
\begin{equation}\label{eq: int of gn}
\int_{T[x,y]} g_n\, d\mathcal{H}^{1/\e}=\int_{T[x,y]} \sum_{j=n}^{M_n} \lambda_j^n F_j \,d\mathcal{H}^{1/\e}=\sum_{j=n}^{M_n} \lambda_j^n\int_{T[x,y]}  F_j \,d\mathcal{H}^{1/\e}.
\end{equation} For $j\geq n$, since $S_n \subset S_j$, we can find points $x=q_0,q_1,\dots,q_{N}=y$ such that $(q_{i-1},q_{i})\in E_j$ for all $i\in\{1,\dots,N\}$. Therefore,
\begin{align*}
\int_{T[x,y]} F_j \,d\nu &= \sum_{i=1}^{N}\int_{T[q_{i-1},q_{i}]} F_j \, d\mathcal{H}^{1/\e}\\
&= \sum_{i=1}^{N}
    \frac{f(q_{i})-f(q_{i-1})}{\mathcal{H}^{1/\e}(T[q_{i-1},q_i])}
    \mathcal{H}^{1/\e}(T[q_{i-1},q_i])\\
&=\int_{T[x,y]} F_j \, d\mathcal{H}^{1/\e}\\
&=f(y)-f(x),\end{align*}
which along with \eqref{eq: int of gn} and \eqref{eq: Mazur seq convex} yields
$$\int_{T[x,y]} g_n\, d\mathcal{H}^{1/\e}=\sum_{j=n}^{M_n} \lambda_j^n(f(y)-f(x))=f(y)-f(x).$$ 
The $L^p$ convergence of $g_n$ to $g$, along with the above, completes the proof of \eqref{eq:FTC}.

It remains to show that $\int_{S\cap K}|g|^pd\nu\lesssim \cE_{\mathscr{V},K}^{p}(f)$. Indeed,
\begin{align*}
\int_{S\cap K} |g|^p \, dv
    &= \int_{S\cap K} \left|\lim_{n\to\infty} g_n \right|^p \, d\mathcal{H}^{1/\e} \\
    &= \int_{S\cap K} \left|\lim_{n\to\infty}
    \sum_{j=k}^{M_k} \lambda_j^k F_j \right|^p \, d\mathcal{H}^{1/\e} \\
    &\leq \int_{S\cap K} \left|\lim_{n\to\infty}
    \sup_{j\leq n} F_j \right|^p \, d\mathcal{H}^{1/\e} \\
    &\leq \sup_{j\in \mathbb{N}}
    \int_{S\cap K} |F_j|^p \, d\mathcal{H}^{1/\e} \\
    &\lesssim \cE_{\mathscr{V},K}^{p}(f),
\end{align*} 
where the last inequality is a result of the definitions of $F_j$ and $\cE_{\mathscr{V},K}^{p}(f)$.
\end{proof}

We end this section with a Morrey-type inequality that is essential to both the comparison to the Korevaar-Schoen space and the capacity estimate. 

\begin{prop}\label{prop: Morrey-Sobolev}
Let $K\subset T$ be closed and connected, $p>1$, and $f\in \cW^{1,p}_{\mathscr{V}}(T)$. Then for all $x,y\in K$ we have
\begin{equation*}
|f(x)-f(y)|^p\lesssim d(x,y)^{\tfrac{p-1}{\varepsilon}} \cE_{K,\mathscr{V}}^p(f).
    \end{equation*}
\end{prop}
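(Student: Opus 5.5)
Our plan is to use the arc-wise derivative representation of Proposition \ref{prop: discrete-gradient}, together with Hölder's inequality on the arc $T[x,y]$ and the snowtree condition $\mathcal{H}^{1/\e}(T[x,y])\simeq d(x,y)^{1/\e}$.

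Let $g\in L^p(S,\mathcal{H}^{1/\e})$ be the function given by Proposition \ref{prop: discrete-gradient}, and fix $x,y\in K$. Since $K$ is a closed connected subset of a tree, it contains the arc $T[x,y]$.

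First we treat the case where $x,y$ both lie in $V_n$ for some $n$. In that case $T[x,y]\subset S_n$, and $T[x,y]$ is an essentially disjoint union of arcs $T[q_{i-1},q_i]$ with each consecutive pair in $E_n$ (in one orientation or the other). Summing \eqref{eq:FTC} over these arcs gives
\[ f(y)-f(x)=\pm\int_{T[x,y]} g\, d\mathcal{H}^{1/\e}. \]
Hölder's inequality then yields
\[ |f(x)-f(y)|^p\le \mathcal{H}^{1/\e}(T[x,y])^{p-1}\int_{T[x,y]}|g|^p\,d\mathcal{H}^{1/\e}\lesssim d(x,y)^{\frac{p-1}{\e}}\int_{T[x,y]}|g|^p\,d\mathcal{H}^{1/\e}. \]
Applying the ``moreover'' part of Proposition \ref{prop: discrete-gradient} with the connected set $T[x,y]$ bounds the last integral by a multiple of $\cE^p_{\mathscr V,T[x,y]}(f)$. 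Monotonicity of the energy in the set, since $T[x,y]\subset K$, then bounds this by $\cE^p_{\mathscr V,K}(f)$.

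For general $x,y\in K$ we approximate, and this is the step we expect to be the main obstacle. The vertices of $V_n$ near $x$ need not lie in $K$, and $x$ need not even lie in the skeleton $S$. Our approach is to pick $x_n,y_n\in V_n$ with $x_n\to x$ and $y_n\to y$, which is possible by property (3) of a multiscale partition. For $n$ large we have $d(x_n,y_n)\le 2d(x,y)$. Continuity of $f$ lets us pass to the limit in the estimate
\[ |f(x_n)-f(y_n)|^p\lesssim d(x_n,y_n)^{\frac{p-1}{\e}}\int_{T[x_n,y_n]}|g|^p\,d\mathcal{H}^{1/\e}. \]
It then remains to show that $\limsup_n\int_{T[x_n,y_n]}|g|^p$ is at most a constant times $\cE^p_{\mathscr V,K}(f)$.

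To control this limit, we note that $T[x_n,y_n]$ is contained in $T[x,y]\cup T[x,x_n]\cup T[y,y_n]$. The arcs $T[x,x_n]$ have diameter comparable to $d(x,x_n)\to0$ by the snowtree property, so their $\mathcal{H}^{1/\e}$ measure tends to $0$. Absolute continuity of the integral of $|g|^p\in L^1$ then makes the extra pieces negligible. The remaining piece $\int_{T[x,y]\cap S}|g|^p$ is bounded via Proposition \ref{prop: discrete-gradient} applied to $K$, as in the first case. If the comparison constant in the ``moreover'' part causes trouble for sets $K$ that do not contain the relevant vertices, we will instead rely directly on the direction of that proof bounding $\int_{K\cap S}|g|^p$ by the energy on $K$. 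The limiting argument gives the inequality for all $x,y\in K$, with a constant depending only on $p$, $\e$ and the snowtree constant.
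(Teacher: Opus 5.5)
Your argument is correct and is essentially the paper's proof. Both use the FTC along the chain of $E_m$-edges covering $T[x,y]$, H\"older's inequality, the comparability $\mathcal{H}^{1/\e}(T[x,y])\simeq d(x,y)^{1/\e}$, the `moreover' part of Proposition~\ref{prop: discrete-gradient}, and then a continuity argument. One small correction: the edges of $E_m$ are oriented away from $\rho_{\mathscr V}$, so the orientation along $T[x,y]$ flips at the point $c$ with $T[\rho_{\mathscr V},x]\cap T[\rho_{\mathscr V},y]=T[\rho_{\mathscr V},c]$; write $|f(y)-f(x)|\le\int_{T[x,y]}|g|\,d\mathcal{H}^{1/\e}$ instead of $f(y)-f(x)=\pm\int_{T[x,y]}g\,d\mathcal{H}^{1/\e}$.

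The only difference is the limiting step. The paper approximates $x,y$ by vertices lying in $K$, which tacitly needs $\bigcup_n V_n\cap K$ dense in $K$. You allow $x_n,y_n\notin K$, kill $T[x,x_n]\cup T[y,y_n]$ by absolute continuity of $\int|g|^p\,d\mathcal{H}^{1/\e}$, and bound the rest by $\int_{K\cap S}|g|^p\,d\mathcal{H}^{1/\e}\lesssim\cE^p_{\mathscr V,K}(f)$. This is slightly cleaner, though both versions rest on the `moreover' part of Proposition~\ref{prop: discrete-gradient} for a general connected $K$.
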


\begin{proof}
Let $x,y\in (\bigcup_nV_n)\cap K$, and let $m\in \N$ be minimal so that $x,y\in V_m$. Suppose $x=q_0, q_1, \dots, q_M=y\in V_m$ are all the $m$-vertices lying on the arc $T[x,y]$. Since $K$ is connected, $x=q_0, q_1, \dots, q_M=y\in K$. By Proposition \ref{prop: discrete-gradient} and H\"older inequality,
\begin{align*}
|f(x)-f(y)| \leq &\sum_{i=1}^{M-1} |f(q_i)-f(q_{i+1})|\\
&\lesssim \sum_{i=1}^{M-1} \int_{T[q_i,q_{i+1}]} |g| \, d\mathcal{H}^{1/\e} \\
&= \int_{T[x,y]} |g| \, d\mathcal{H}^{1/\e}\\
&\lesssim \left(\int_{T[x,y]} |g|^p \, d\mathcal{H}^{1/\e}u\right)^{1/p}\mathcal{H}^{1/\e}(T[x,y])^{\tfrac{p-1}{p}}.
\end{align*}
Since $T$ is an $\eps$-snowtree, $\mathcal{H}^{1/\e}(T[x,y])\simeq d(x,y)^{1/\varepsilon}$, and another application of Proposition \ref{prop: discrete-gradient} yields
$$|f(x)-f(y)|\lesssim d(x,y)^{\tfrac{p-1}{\varepsilon p}}\cE_{\mathscr{V},K}^{p}(f)^{1/p}.$$ 
By continuity of $f$ and density of $\bigcup_nV_n$ in $T$, the proof is complete.
\end{proof}

\section{Korevaar--Schoen energies on Ahlfors regular snowtrees}\label{sec: KS same discrete}
The goal of this section is to give the proof of Theorem \ref{thm:discrete-KS-equivalence}. Throughout this section we fix a compact $Q$-Ahlfors regular $\eps$-snowtree $(T,d,\mu)$. Let 
\[R_0 = \frac1{100}\min\{1,\diam{T}\}\]
and let $C_T\geq 1$ and $C_A\geq 1$ be constants such that 
$$C_T^{-1} d(x,y)^{\frac1\eps}\leq \mathcal{H}^{1/\e}([x,y])\leq C_T d(x,y)^{\frac1\eps},$$ 
for all $x,y\in T$ and 
\[C_A^{-1}R^Q \leq \mu(B(x,R)) \leq C_A R^{Q}\]
for all $x\in T$ and $0<R<\diam{T}$.
We also fix a multiscale partition $\mathscr{V} = (V_n)_{n\in \N}$ of $T$. Recall the definition of skeleton $S = \bigcup_{n\in\N}S_n$ from Section \ref{sec:discrete energies}. 

The proof of \eqref{eq:mainthm} splits into two parts, Proposition \ref{prop:discrete-to-KS} and Proposition \ref{prop:KS-to-discrete}. 

\begin{prop}\label{prop:discrete-to-KS}
Let $1<p<\infty$. If $f\in\cW^{1,p}_{\mathscr{V}}(T)$, then $f\in KS^{1,p}_{\alpha_p}(T)$ and
$$\limsup_{R\to0^+}E_{p,\a_p}(f,R)\le C\cE_{\mathscr{V},T}^p(f).$$
The constant $C$ depends only on $p$, $Q$, $C_A$, and $\eps$.
\end{prop}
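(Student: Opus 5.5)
The plan is to localize the Morrey-type estimate of Proposition \ref{prop: Morrey-Sobolev} to a neighborhood of size comparable to $R$ and then integrate it using Fubini. A global Morrey bound is not enough. Since
\[
p\alpha_p = Q + \frac{p-1}{\eps},
\]
a global bound $|f(y)-f(x)|^p \lesssim R^{(p-1)/\eps}\,\cE^p_{\mathscr V,T}(f)$ would only give $E_{p,\alpha_p}(f,R)\lesssim R^{-Q}\cE^p_{\mathscr V,T}(f)$, which blows up as $R\to 0^+$. The $R^{-Q}$ factor has to be absorbed by the measure of a ball, and this forces us to replace the global energy by the energy on a set of diameter $\simeq R$.

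\textbf{Step 1: arcs stay in a comparable ball.} Fix $x\in T$ and $R<R_0$. Let
\[
K_x:=\overline{\bigcup_{y\in B(x,R)} T[x,y]}.
\]
This set is closed and connected, being the closure of a union of arcs through $x$. I claim $K_x\subset B(x,CR)$ with $C$ depending only on $C_T$ and $\eps$. Indeed, if $z\in T[x,y]$ then $T[x,z]\subset T[x,y]$, so the snowtree condition gives
\[
C_T^{-1}d(x,z)^{1/\eps}\le \mathcal H^{1/\eps}(T[x,z])\le \mathcal H^{1/\eps}(T[x,y])\le C_T\, d(x,y)^{1/\eps}.
\]
Hence $d(x,z)\le C_T^{2\eps}R$.

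\textbf{Step 2: local Morrey and local energy.} Apply Proposition \ref{prop: Morrey-Sobolev} with $K=K_x$, then the local comparison in Proposition \ref{prop: discrete-gradient}. With $g$ the function from Proposition \ref{prop: discrete-gradient}, this gives, for all $y\in B(x,R)$,
\[
|f(y)-f(x)|^p\lesssim R^{\frac{p-1}{\eps}}\,\cE^p_{\mathscr V,K_x}(f)\lesssim R^{\frac{p-1}{\eps}}\int_{S\cap B(x,CR)}|g|^p\,d\mathcal H^{1/\eps}.
\]
Averaging over $y\in B(x,R)$ and dividing by $R^{p\alpha_p}=R^{Q+(p-1)/\eps}$, the powers $R^{(p-1)/\eps}$ cancel. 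We are left with
\[
E_{p,\alpha_p}(f,R)\lesssim R^{-Q}\int_T\int_{S}\chi_{\{d(x,z)<CR\}}\,|g(z)|^p\,d\mathcal H^{1/\eps}(z)\,d\mu(x).
\]

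\textbf{Step 3: Fubini and Ahlfors regularity.} Exchange the order of integration (Tonelli; everything is nonnegative). The inner integral becomes $\mu(B(z,CR))\le C_A C^Q R^Q$. For this, $R_0$ is small enough relative to $\diam T$; otherwise use $\mu(T)$, which is also fine for small $R$. This gives
\[
E_{p,\alpha_p}(f,R)\lesssim\int_S|g|^p\,d\mathcal H^{1/\eps}\simeq\cE^p_{\mathscr V,T}(f),
\]
uniformly in $R<R_0$. Taking $\limsup_{R\to0^+}$ concludes, and $f\in KS^{1,p}_{\alpha_p}(T)$ follows.

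The main point to check carefully is Step 2. The local energy $\cE^p_{\mathscr V,K_x}$ only counts edges with both endpoints in $K_x$. I need to confirm that the Morrey estimate for $K=K_x$ really uses only such edges, or else $g$ integrated over $K_x\cap S$, so that the localization to $B(x,CR)$ is legitimate. The Morrey proof integrates $g$ along $T[x,y]\subset K_x$, so I would rather bypass $\cE^p_{\mathscr V,K_x}$ entirely. That route is to use the Hölder bound $|f(y)-f(x)|^p\lesssim d(x,y)^{(p-1)/\eps}\int_{T[x,y]}|g|^p$ directly from \eqref{eq:FTC}. First prove it for $x,y$ in the dense set $\bigcup_n V_n$, then extend by continuity. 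This also makes the constant depend only on $p,Q,C_A,\eps$ and the snowtree constant $C_T$.
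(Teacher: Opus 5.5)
Your proposal is correct and follows essentially the same route as the paper: a pointwise Morrey-type bound $|f(y)-f(x)|^p\lesssim R^{(p-1)/\eps}\int_{\mathrm{ball}\cap S}|g|^p\,d\mathcal H^{1/\eps}$, then Tonelli, the upper Ahlfors bound on $\mu(B(z,\cdot))$, and $\int_S|g|^p\,d\mathcal H^{1/\eps}\simeq\cE^p_{\mathscr V,T}(f)$ from Proposition~\ref{prop: discrete-gradient}. Your version is slightly more careful than the paper's in three ways. First, an arc $T[x,y]$ with $y\in B(x,R)$ is only guaranteed to lie in $B(x,C_T^{2\eps}R)$, whereas the paper writes $B(x,R)$. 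Second, deriving the bound directly from \eqref{eq:FTC} and H\"older on $\bigcup_n V_n$, before passing to limits, avoids the localized comparison $\cE^p_{\mathscr V,K}\simeq\int_K|g|^p$. Third, as you note, the constant genuinely depends on $C_T$ as well.
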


\begin{proof}
Let $R>0$, $x \in T$, and $y\in B(x,R)$. By Propositions \ref{prop: discrete-gradient} and \ref{prop: Morrey-Sobolev}, there exists $g\in L^p(S,\mathcal{H}^{1/\e})$
such that
$$
    |f(y)-f(x)|^p\lesssim
    R^{\frac1\eps(p-1)}
    \int_{B(x,R)\cap S}|g|^p\,d\mathcal{H}^{1/\e}.
$$
Using this estimate on $E_{p,\a_p}(f,R)$ gives
\begin{align}\label{eq: Phi upper}
E_{p,\a_p}(f,R) &\le \frac{R^{\frac1\e(p-1)}}{R^{p\alpha_p}} \int_T\frac1{\mu(B(x,R))}\int_{B(x,R)}\left(\int_{B(x,R)\cap S}|g|^p\,d\mathcal{H}^{1/\e}\right)d\mu(y)\,d\mu(x) \\
&=R^{-Q}\int_T\int_{B(x,R)\cap S}|g|^p\,d\mathcal{H}^{1/\e}\,d\mu(x), \notag
\end{align}
where we used Ahlfors regularity and the fact that $p\alpha_p=Q+\frac1\eps(p-1)$. We apply Fubini-Tonelli theorem to the nonnegative measurable function
$$(x,z)\mapsto |g(z)|^p \chi_{\{d(x,z)<R\}},$$
since $\mu$ is finite and $\mathcal{H}^{1/\e}$ is $\sigma$-finite. Thus, by Ahlfors regularity of $\mu$,
\begin{align*}
\int_T\int_{B(x,R)\cap S}|g|^p\,d\mathcal{H}^{1/\e}\,d\mu(x)&=\int_T \int_S |g(z)|^p\chi_{\{d(x,z)<R\}}\,d\mathcal{H}^{1/\e}(z)\,d\mu(x) \\
&=\int_S |g(z)|^p\int_T \chi_{\{d(x,z)<R\}}\,d\mu(x)\,d\mathcal{H}^{1/\e}(z) \\
&=\int_S |g(z)|^p\mu(B(z,R))\,d\mathcal{H}^{1/\e}(z) \\
&\lesssim R^Q\int_S|g|^p\,d\mathcal{H}^{1/\e}.
\end{align*}
Using the above on \eqref{eq: Phi upper} yields 
$$E_{p,\a_p}(f,R)\lesssim\int_S|g|^p\,d\mathcal{H}^{1/\e}=\cE_{\mathscr{V},T}^{p}(f).$$
Taking $\limsup_{R\to0^+}$ completes the proof.
\end{proof}

The reverse inequality in \eqref{eq:mainthm} requires more work. The first step is to create a partition of unity on $T$. This is done in three steps. In Lemma \ref{lem:arcwise-representatives}, we show that $1/\e$-H\"older functions on $T$ support a notion of gradient. In Lemma \ref{lem:snowtree-cutoff} we create appropriate ``bump functions'' on $T$, and in Lemma \ref{lem:snowtree-partition} we create the partition of unity.

\begin{lem}\label{lem:arcwise-representatives}
Let $h\in C(T)$ for which there exists $L<\infty$ such that for all $n\in \N$ and all $(x,y)\in E_n$,
\begin{equation}\label{eq:h Lip}
|h(x)-h(y)|\le L\, \mathcal{H}^{1/\e}(T[x,y]).
\end{equation}
Then there exists  $\partial h:T\to \R$ such that $|\partial h(z)|\le L$ for $\mathcal{H}^{1/\e}$-a.e.~$z\in S$, and 
\begin{equation}\label{eq: h one-dim derivative}
h(y)-h(x)=\int_{T[x,y]} \partial h\,d\mathcal{H}^{1/\e}
\end{equation}
for all $n\in\N$ and $(x,y)\in E_n$. Moreover, if $K$ is a finite union of closed arcs and $h$ is constant on each component of $T\setminus K$, then $\partial h=0$ for $\mathcal{H}^{1/\e}$-a.e.~point of $T\setminus K$.
\end{lem}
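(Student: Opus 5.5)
We adapt the construction from the proof of Proposition~\ref{prop: discrete-gradient}, now using an $L^\infty$ bound in place of an $L^p$ bound. For each $n\in\N$ we take the piecewise constant function $H_n:T\to\R$ given by
\[
H_n(z)=\frac{h(x_{i+1})-h(x_i)}{\mathcal{H}^{1/\e}(T[x_i,x_{i+1}])}\quad\text{for } z\in T(x_i,x_{i+1}),\ (x_i,x_{i+1})\in E_n,
\]
and $H_n=0$ elsewhere. By \eqref{eq:h Lip} we have $|H_n|\le L$ everywhere. The refinement computation used for the $F_j$ in Proposition~\ref{prop: discrete-gradient} shows that, for $k\le n$ and $(x,y)\in E_k$,
\[
\int_{T[x,y]}H_n\,d\mathcal{H}^{1/\e}=h(y)-h(x).
\]
Indeed, $T[x,y]$ splits into consecutive $E_n$-arcs, and the sum telescopes.

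To pass to a limit we restrict to $S$. Note that $\mathcal{H}^{1/\e}$ is $\sigma$-finite on $S$, since $S$ is a countable union of finite unions of arcs of finite measure. We fix some $p\in(1,\infty)$ and work on the finite-measure pieces $S_m$. There $(H_n)$ is bounded in $L^p(S_m,\mathcal{H}^{1/\e})$, so we can apply Mazur's lemma as before, together with a diagonal argument over $m$. Alternatively, we can use weak-$*$ compactness in $L^\infty(S)=(L^1(S))^*$, which is valid by $\sigma$-finiteness. Either way we obtain convex combinations $g_k=\sum_{j\ge k}\lambda^k_jH_j$ converging to some $\partial h$, either $\mathcal{H}^{1/\e}$-a.e.\ on $S$ or weakly-$*$; we set $\partial h=0$ off $S$. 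Since each $g_k$ takes values in $[-L,L]$, the limit satisfies $|\partial h|\le L$ a.e. Testing against $\chi_{T[x,y]}\in L^1$, where $(x,y)\in E_k$ and $\mathcal{H}^{1/\e}(T[x,y])<\infty$, and using that every $g_j$ with $j\ge k$ has integral $h(y)-h(x)$ over this arc, gives \eqref{eq: h one-dim derivative}.

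For the final claim, let $K$ be a finite union of closed arcs and suppose $h$ is constant on each component of $T\setminus K$. Consider an edge $(x_i,x_{i+1})\in E_n$. If the open arc $T(x_i,x_{i+1})$ misses $K$, then $T[x_i,x_{i+1}]$ lies in the closure of a single component of $T\setminus K$. By continuity $h(x_i)=h(x_{i+1})$, so $H_n=0$ on that arc. Hence $H_n$ vanishes on $S\setminus K$ except on those $E_n$-arcs that meet $K$ without being contained in it. The main obstacle is controlling this bad set.

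For the bad set, observe that $K$ has finitely many endpoints and branch points, say $N$ of them. An $E_n$-arc $T[x_i,x_{i+1}]$ that meets both $K$ and $T\setminus K$ must contain one of these boundary points of $K$ (relative to $S$), because arcs in a tree are connected. Therefore at most $N'$ such arcs occur at each level, where $N'$ depends only on $K$ (each boundary point lies in boundedly many edges, since $E_n$-arcs are essentially disjoint). Moreover, the portion of such an arc lying outside $K$ has measure tending to $0$ as $n\to\infty$, since the mesh $\sup_z\dist(z,V_n)$ tends to $0$. By the snowtree condition, an edge containing a boundary point $b$ has small diameter near $b$, so its part outside $K$ shrinks. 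This means $\mathcal{H}^{1/\e}(\{H_n\ne0\}\setminus K)\to0$, and then for any set $A\subset T\setminus K$ of finite measure,
\[
\left|\int_A H_n\right|\le L\,\mathcal{H}^{1/\e}(\{H_n\neq 0\}\setminus K)\to 0 .
\]
Passing to the convex combinations and the limit gives $\int_A\partial h=0$ for all such $A$, hence $\partial h=0$ a.e.\ on $T\setminus K$.

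Care is needed at one point: an edge could cross a boundary point while being long. However, $(x_i,x_{i+1})\in E_n$ has endpoints within distance of order the mesh of $V_n$ of any point of the arc, since the arc contains no other vertices. So the edge diameters tend to $0$ uniformly near boundary points, as required.
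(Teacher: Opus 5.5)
Your construction of $\partial h$ is sound, and it differs from the paper's. You take limits of the difference quotients $H_n$, for which $|H_n|\le L$ is immediate. The paper instead proceeds in three steps:
\begin{enumerate}
\item it upgrades the edgewise Lipschitz hypothesis to all pairs of points inside an edge;
\item it parametrizes each edge by $\mathcal{H}^{1/\e}$-arclength $\gamma_{x,y}$;
\item it defines $\partial h$ as the a.e.\ derivative of the $L$-Lipschitz function $h\circ\gamma_{x,y}$.
\end{enumerate}
Your route gets the bound and the fundamental theorem of calculus identity with less work. The cost is that your $\partial h$ is only an abstract limit, so the locality claim needs a real argument. In the paper it comes essentially for free: $\partial h$ is a pointwise derivative along arcs, and $h\circ\gamma_{x,y}$ is locally constant near every parameter mapped into the open set $T\setminus K$.

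The gap is in your count of bad edges. You assert that an $E_n$-edge meeting both $K$ and $T\setminus K$ must contain an endpoint or branch point of $K$. This is false, and connectedness of arcs does not give it. Take $K=T[a,c]$ and a branch point $b\in T(a,c)$ of $T$, with a component $B$ of $T\setminus\{b\}$ disjoint from $K$. Take $x\in T(a,b)$ and $y\in B$. If no vertex of $V_n$ lies on the $c$-side of $b$, then $b$ is not a branch point of $\Hull(V_n)$, so $T[x,y]$ can be an edge of $E_n$. This edge meets $K$ and $T\setminus K$, yet contains neither $a$, $c$, nor any branch point of $K$. If you instead mean boundary points of $K$ relative to $S$, there can be infinitely many of them: branch points of $S$ may be dense along $K$, as in the Vicsek tree. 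So there is no finite $N$ under either reading.

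The bound you want is nevertheless true, but its reason is the partition axiom that branch points of $\Hull(V_n)$ lie in $V_n$. Write $K=K_1\cup\dots\cup K_m$. Suppose an open edge left $K_j$ at a point $q$ interior to the arc $\Hull(V_n)\cap K_j$. Then $q$ would have three directions in $\Hull(V_n)$, hence $q\in V_n$, which is impossible for an interior point of an edge. So exits occur only at the at most $2m$ endpoints of the arcs $\Hull(V_n)\cap K_j$, and there are at most $2m$ bad edges per level.

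Simpler still, you can drop the counting altogether. Every non-vertex $z\in S\setminus K$ has $\dist(z,K)>0$, and the level-$n$ edge containing $z$ has diameter tending to $0$. That last fact needs justification beyond your final paragraph. Because $T(x_i,x_{i+1})$ is a full component of $\Hull(V_n)\setminus V_n$, the arc from a point of it to the nearest vertex must pass through $x_i$ or $x_{i+1}$; the snowtree condition then bounds edge diameters by a multiple of the mesh. Hence $H_n(z)=0$ for all large $n$, so $g_k(z)=0$ eventually; in the weak-$*$ version, use dominated convergence instead. Either way $\partial h=0$ a.e.\ on $S\setminus K$.
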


\begin{proof}
First, we show the following stronger version of \eqref{eq:h Lip}: for all $n\in\N$ and all $(x,y)\in E_n$, 
\begin{equation}\label{eq:h Lip2}
|h(z)-h(w)|\le L\, \mathcal{H}^{1/\e}(T[z,w]), \quad \text{for all $z,w\in T[x,y]$}.
\end{equation}
Indeed, fix $z,w\in T[x,y]$. Without loss of generality, we may assume that $z\in T[x,w]$. For each $k\geq n$ let $z_k$ be the point in $V_k\cap T(x,y)$ closest to $z$, and let $w_k$ be the point in $V_k\cap T[z,y]$ closest to $w$. There also exist, for each $k\geq n$, points $z_k =q_{k,0},q_{k,1},\dots, q_{k,N_k}=w_k$ in $T[x,y]\cap V_k$ so that $(q_{k,i-1},q_{k,i}) \in E_k$ for all $i\in\{1,\dots,N_k\}$. Then, applying \eqref{eq:h Lip} several times,
\begin{align*}
|h(z_k)-h(w_k)| \leq \sum_{i=1}^{N_k}|h(q_{k,i-1})-h(q_{k,i+1})| &\leq L\sum_{i=1}^{N_k}\mathcal{H}^{1/\e}(T([q_{k,i-1},q_{k,i}]))\\ 
&= \mathcal{H}^{1/\e}(T([z_k,w_k])).
\end{align*}
By density of $\bigcup_k V_k$ in $T$, $z_k\to z$, $w_k \to w$, and $\mathcal{H}^{1/\e}(T([z_k,w_k])) \to \mathcal{H}^{1/\e}(T([z,w]))$ as $k\to \infty$. Moreover, by continuity of $f$, $f(z_k)\to f(z)$ and $f(w_k)\to f(w)$ as $k\to \infty$, which completes the proof of \eqref{eq:h Lip2}.

Recall now that $\mathcal{H}^{1/\e}(T[u,v])\simeq d(u,v)^{\frac1\eps}$ for each $u,v\in T$. Given distinct points $x,y\in T$, there exists a unique map
$$\gamma_{x,y}:[0,\mathcal{H}^{1/\e}(T[x,y])]\to T[x,y] $$
such that $\gamma_{x,y}(0)=x$, $\gamma_{x,y}(\mathcal{H}^{1/\e}(T([x,y])))=y$, and
$$\mathcal{H}^{1/\e}(T[\gamma_{x,y}(s),\gamma_{x,y}(t)])=|s-t|, \quad \text{for all $s,t\in[0,\mathcal{H}^{1/\e}(T[x,y])]$}.$$
% say inverse of F:a\to[0,\mathcal{H}^{1/\e}(a)],  F(z):=\mathcal{H}^{1/\e}([x,z])

By \eqref{eq:h Lip2}, for each $n\in\N$ and $(x,y)\in E_n$, the function 
\[ h\circ\gamma_{x,y}: [0,\mathcal{H}^{1/\e}(T[x,y])]\to \R\] 
is $L$-Lipschitz so it is differentiable at $\mathcal{H}^1$-a.e.~$t$, with
\begin{equation}\label{eq: one dim h Lip deriv}
    |(h\circ\gamma_{x,y})'(t)|\le L.
\end{equation}
We follow the convention that $(h\circ\gamma_{x,y})'(t)$ denotes the derivative at $t$ where $h\circ\gamma_{x,y}$ is differentiable, and $0$ otherwise. 

Define $\partial h_1:T\to \R$ by
$$\partial h_1(z) =
    \begin{cases}
    (h \circ \gamma_{x,y})'(t), & \text{if $z=\gamma_{x,y}(t)$ with $(x,y)\in E_1$,}\\
    %, $t\in (0,\mathcal{H}^{1/\e}(T[x,y]))$,} \\
    0, & \text{else}.
    \end{cases}$$ 
Assuming, we have defined $\partial h_{n}$ for some $n\in\N$, define $\partial h_{n+1}:T\to \R$ with
$$\partial h_{n+1}(z) =
    \begin{cases}
    \partial h_{n}(z), & \text{if $z\in T[x,y]$ for some $(x,y)\in E_{n}$,} \\
    (h \circ \gamma_{x,y})'(t), & \text{if $z=\gamma_{x,y}(t)$, $(x,y)\in E_{n+1}$, and $T[x,y] \not\subset S_n$},\\
    0, & \text{else}.
    \end{cases}$$
Note that for all $n\in\N$, $\partial h_n |T\setminus S =0$ and that $\partial h_{n+1}|S_n=\partial h_{n}|S_n$, $\mathcal{H}^{1/\e}$-a.e.~on $S_{n}$. If for $z\in S$ there is some $n\in \N$ with $\partial h_n(z)\neq 0$, denote by $n_z$ the smallest such integer. Define 
$\partial h:T\to\R$ with
$$\partial h(z) =
    \begin{cases}
    \partial h_{n_z}(z), & \text{if $z\in S$ and there exists $n\in\N$ with $\partial h_{n}(z)\neq 0$}, \\
    0, & \text{else}.
    \end{cases}$$
The $\mathcal{H}^{1/\e}$-a.e.~bound $|\partial h|\le L$ is immediate by \eqref{eq: one dim h Lip deriv}.

Given $(x,y)\in E_n$, by the absolute continuity of $h\circ\gamma_{x,y}$ and the fact that $\partial h(z)=\partial h_n(z)$ for $\mathcal{H}^{1/\e}$-a.e.~$z\in T[x,y]$, we have
$$ h(y)-h(x) = \int_{0}^{\mathcal{H}^{1/\e}(T[x,y])]}(h\circ\gamma_{x,y})'(t)\,dt = \int_{T[x,y]} \partial h\,d\mathcal{H}^{1/\e}.$$

For the second claim of the lemma, suppose that $K\subset T$ is a finite union of closed arcs and that $h$ is constant on each component of $T\setminus K$. For every $(x,y)\in \bigcup_{n\in\N} E_n$ with $T[x,y]$ being contained in a component of $T\setminus K$, the derivative of $h\circ \gamma_{x,y}$ is
zero $\mathcal{H}^1$-a.e., and so $\partial h(z)=0$ for $\mathcal{H}^{1/\e}$-a.e.~$z\in T[x,y]$. The proof is complete by an application of \eqref{eq: h one-dim derivative}.
\end{proof}

\begin{lem}\label{lem:snowtree-cutoff}
There exists a constant $C_1\ge1$ depending only on $C_T$, $\eps$, $Q$, and $C_A$ such that for every $x\in T$ and every $0<R<R_0$, there exists a function $\Psi_x^R\in C(T)$ with the following properties:
\begin{enumerate}
\item $0\le \Psi_x^R\le1$;
\item $\Psi_x^R|_{B(x,C_1^{-1}R)}\ge1/2$ and $\Psi_x^R|_{T\setminus B(x,C_1R)}=0$;
\item there exists  $\partial\Psi_x^R:T\to \R$ such that, for every $n\in\N$ and every $(u,v)\in E_n$,
$$\Psi_x^R(v)-\Psi_x^R(u)=\int_{T[x,y]} \partial\Psi_x^R\,d\mathcal{H}^{1/\e},$$
and $|\partial\Psi_x^R(z)|\le C_1 R^{-\frac1\eps}$ for $\mathcal{H}^{1/\e}$-a.e.~$z\in T$;
\item the support of $\partial\Psi_x^R$ is contained in $S\cap\overline{B(x,C_1R)}$, and
$$\mathcal{H}^{1/\e}\left(\{z\in S:\partial\Psi_x^R(z)\neq0\}\right)\le C_1 R^{\frac1\eps}.$$
\end{enumerate}
%In particular,
%$$\int_S|\partial\Psi_x^R|^p\,d\mathcal{H}^{1/\e}\le C R^{-\frac1\eps(p-1)}.$$
\end{lem}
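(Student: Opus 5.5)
The plan is to build $\Psi_x^R$ as a truncated "distance along arcs" function measured in $\mathcal{H}^{1/\e}$, restricted to a finite subtree, and then read off (3) and (4) from Lemma \ref{lem:arcwise-representatives}. Fix $x\in T$ and $0<R<R_0$.

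\emph{Step 1: choose a finite subtree.} Choose a point $x_0$ of $\bigcup_n V_n$ with $d(x,x_0)<\delta R$, where $\delta$ is small. Choose $m$ so large that every point of $T$ lies within $\delta R$ of $V_m$. Let $K$ be the hull of $V_m\cap \overline{B(x_0,R)}$ together with $x_0$, and let $K$ be "filled in" along arcs of $E_m$ only. Since $K$ lies inside $S_m$, it is a finite union of closed arcs with endpoints in $V_m$.

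\emph{Step 2: define the function.} For $z\in K$ set
\[ h(z)=\max\{0,\,1-R^{-1/\e}\mathcal{H}^{1/\e}(T[x_0,z])\}. \]
Extend $h$ to $T\setminus K$ by making it constant on each component $U$ of $T\setminus K$, with value $h(\pi(z))$. Here $\pi$ is the first-point (nearest-point) retraction onto the closed subtree $K$, which is continuous. Put $\Psi_x^R=h$.

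\emph{Step 3: check continuity, the Lipschitz bound and (1).} Continuity follows since $\pi$ is continuous. Property (1) is clear. On each arc $T[u,v]$ with $(u,v)\in E_n$, the function $h$ changes at most as much as $R^{-1/\e}\mathcal{H}^{1/\e}$ of the portion of the arc inside $K$, because $\pi$ does not increase arc-length measure along arcs. This gives \eqref{eq:h Lip} with $L=R^{-1/\e}$. Lemma \ref{lem:arcwise-representatives} then yields $\partial\Psi_x^R$ with the integral identity and $|\partial\Psi|\le R^{-1/\e}$. Its final clause gives $\partial\Psi=0$ a.e.\ off $K$. On $K$, $h$ is also constant (zero) where $\mathcal{H}^{1/\e}(T[x_0,z])\ge R^{1/\e}$. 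Hence the support of $\partial\Psi$ lies in $S\cap\{z\in K:\mathcal{H}^{1/\e}(T[x_0,z])\le R^{1/\e}\}$, which by the snowtree condition lies in $\overline{B(x_0,C_T^{\e}R)}$.

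\emph{Step 4: check (2) and the measure bound in (4).}
\begin{itemize}
\item Lower bound in (2). Take $z\in B(x,C_1^{-1}R)$. Its projection $\pi(z)$ lies on $T[x_0,z]$, so $d(x_0,\pi(z))\lesssim C_1^{-\e}R$ by the snowtree comparison of arc diameter. Hence $\mathcal{H}^{1/\e}(T[x_0,\pi(z)])\le C_T(\cdot)^{1/\e}\le R^{1/\e}/2$ for $C_1$ large, so $h\ge1/2$.
\item Vanishing in (2). For $z$ far away, either $z\in K$ far from $x_0$ (then $h=0$), or $\pi(z)$ lies near the boundary of the ball. This is why $K$ should reach out to distance about $R$: points at distance about $R$ then get value zero, and far points project there.
\item Measure bound in (4). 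The support is covered by the set of $z\in K$ with $\mathcal{H}^{1/\e}(T[x_0,z])\le R^{1/\e}$. This set is a union of arcs from $x_0$, each of measure at most $R^{1/\e}$. Bounding the total requires the number of branches of $K$ to be controlled. This is where Ahlfors regularity enters.
\end{itemize}

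\emph{Main obstacle.} The measure bound in (4) fails if $K$ has many long branches: a naive hull of a $\delta R$-net in $B(x_0,R)$ has about $\delta^{-Q}$ branches of length about $R$. So I would instead let $K$ be a hull of only boundedly many points. Take a maximal $R/C$-separated set in $\overline{B(x_0,R)}\cap\bigcup V_n$; by Ahlfors regularity it has at most $N(C_A,Q,C)$ points. Let $K$ be the union of the arcs from $x_0$ to these points; such arcs are unions of $E_n$-arcs for large $n$.

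With this $K$, the support measure is at most $N R^{1/\e}$, which gives (4). Every $z$ with $d(z,x_0)\ge C_1R$ then projects onto a point of $K$ whose arc from $x_0$ has diameter comparable to $R$. Using the snowtree condition to choose constants, $h(\pi(z))=0$ there, which gives the vanishing part of (2).
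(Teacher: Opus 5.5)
Your construction is a sound alternative to the paper's. However, the vanishing half of (2) does not follow from it as you specified it, and no choice of $C_1$ repairs this. With the net taken in $\overline{B(x_0,R)}$ and the cutoff at arc-measure $R^{1/\e}$, the star $K$ need not reach the level where $h$ vanishes. On a segment ($\e=1$, $C_T=1$) the outermost net point may lie at distance $(1-\frac1{2C})R$ from $x_0$, and then $h\equiv\frac1{2C}$ on everything beyond it. ``Arc of diameter comparable to $R$'' is not enough: $K$ must overshoot the cutoff.

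The fix is as follows. Take the net in $\overline{B(x_0,\lambda R)}$ with $\lambda=2(2C_T)^{\e}$, and $C$ so large that $C_T(2/C)^{1/\e}\le 1$; the packing bound still gives $\lesssim(\lambda C)^Q$ arcs. Suppose $d(x_0,z)\ge\lambda R/2$. Choose $w\in T[x_0,z]$ with $d(x_0,w)=\lambda R/2$, a net point $p$ with $d(w,p)<2R/C$, and $c$ with $T[x_0,z]\cap T[x_0,p]=T[x_0,c]$. Either $c$ lies beyond $w$ on $T[x_0,z]$, or $T[c,w]\subset T[w,p]$. In both cases
\[
\mathcal{H}^{1/\e}(T[x_0,c])\ge \mathcal{H}^{1/\e}(T[x_0,w])-\mathcal{H}^{1/\e}(T[w,p])\ge 2R^{1/\e}-R^{1/\e}.
\]
Since $c\in T[x_0,z]\cap K=T[x_0,\pi(z)]$, this gives $h(z)=0$.

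Everything else in your argument stands: the lower bound in (2), the Lipschitz bound from $h=\tilde h\circ\pi$ with $\pi$ $1$-Lipschitz for $(z,w)\mapsto\mathcal{H}^{1/\e}(T[z,w])$, and (4). For the support claim inside $K$, the cleanest justification is the last clause of Lemma~\ref{lem:arcwise-representatives} applied to the truncated star $K'=\{z\in K:\mathcal{H}^{1/\e}(T[x_0,z])\le R^{1/\e}\}$. On each complementary component of $K'$, $h$ is constant.

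In the paper, the arcs end at the set $U(x,R)$ of points of $\partial B(x,R)$ having a continuation of diameter at least $R$. The function is $\Psi_x^R=1-\max_u b_u/\mathcal{H}^{1/\e}(T[x,u])$, with $b_u$ the arc-measure from $x$ to the projection onto $T[x,u]$. The per-arc normalization makes vanishing immediate: a far point $y$ supplies its own $u\in U(x,R)\cap T[x,y]$, and $b_u(y)=\mathcal{H}^{1/\e}(T[x,u])$. So no overshoot or branch-point estimate is needed there. The cost is bounding $\card U(x,R)$ through separation of the witnesses $w_u$. That step tacitly uses that distinct points of $U(x,R)$ lie on different branches from $x$. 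This is automatic in geodesic trees, but not when a snowflaked arc crosses $\partial B(x,R)$ several times.

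Your maximal net turns the count into a plain packing estimate. Composing a single radial profile with the retraction gives continuity, the Lipschitz bound, and constancy off $K$ in one stroke.
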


\begin{proof}
Fix $x\in T$, $R\in(0,R_0)$.

Let $ U(x,R)$ be the set of points $u\in T\cap \partial B(x,R)$ for which there exists $w\in T\setminus B(u,R)$ with $u\in T(x,w)$.
%is a connected component $T'$ of $T\setminus\{u\}$ not containing
%$x$ with $\diam{T'}\ge R$. In particular, if $S(x,R)=\{u\in T: d(x,u)=R\}$, then
%$$U(x,R)=\Bigl\{ u\in S(x,R):\,\, \exists \,w\in T\setminus B(u,R)\,\, \text{with}\,\, [u,w]\cap[u,x]=\{u\}\Bigr\}.$$
We claim that $\card U(x,R)\leq M$, for some $M\in \N$ depending only on $C_T$, $\e$, $Q$, and $C_A$. To see that, fix for any $u\in U(x,R)$, a point $w_u \in \partial B(u,R)$ with $u\in T(x,w)$. (This is possible by connectedness of $T$ and the definition of $ U(x,R)$.) Then
$$ \mathcal{H}^{1/\e}(T[x,w_u]) = \mathcal{H}^{1/\e}(T[x,u])+\mathcal{H}^{1/\e}(T[u,w_u]) \le 2C_T R^{\frac1\eps}, $$
and, hence, $d(x,w_u)\le (2C_T^2)^{\e}R$. If $u, v\in U(x,R)$ are distinct, then the arc $T[w_u,w_v]$ contains the two disjoint  arcs $T[w_u,u]$ and $T[v,w_v]$. Therefore
\begin{align*}
C_T d(w_u,w_v)^{\frac1\eps} \ge \mathcal{H}^{1/\e}(T[w_u,w_v]) &\ge \mathcal{H}^{1/\e}(T[w_u,u])+\mathcal{H}^{1/\e}(T[v,w_v])\\ 
&\ge 2C_T^{-1}R^{\frac1\eps}. 
\end{align*}
In particular, $d(w_u,w_v)\ge (2C_T^{-2})^{\e}R$. Thus, the points $\{w_u:u\in U(x,R)\}$ are $cR$-separated, with $c= (2C_T^{-2})^{\e} >0$, and lie in $B(x,C_1R)$. By Ahlfors regularity, the cardinality of
$ U(x,R)$ is bounded by a constant depending only on $C_T$, $\e$, $Q$, $C_A$.

For $u\in U(x,R)$ and $y\in T$, let $c(x,y,u)$ be the unique point satisfying
$$ T[x,y]\cap T[x,u]=T[x,c(x,y,u)],$$
and define $b_u:T\to \R$ with
$$b_u(y):=\mathcal{H}^{1/\e}([x,c(x,y,u)]).$$
We claim that
\begin{equation}\label{eq: b_u is 1-Lip}
|b_u(y_1)-b_u(y_2)| \leq \mathcal{H}^{1/\e}([y_1,y_2]).
\end{equation}
The proof of \eqref{eq: b_u is 1-Lip} is a case study on the relative position of $y$ with respect to $x$ and $u$. Given distinct points $p,q\in T$, let $B_p(q)$ is the connected component of $T\setminus \{p\}$ that contains $q$. If $y\in T[x,u]$, then $b_u(y)=\mathcal{H}^{1/\e}(T[x,y])$. If $y\notin T[x,u]$ and $y\notin B_u(x)$, then $b_u(y)=\mathcal{H}^{1/\e}(T[x,u])$. If $y\notin T[x,u]$ and $y\in B_u(x)\cap B_x(u)$, then $b_u(y) \in (0,\mathcal{H}^{1/\e}(T[x,u]))$. Finally, if $y\notin T[x,u]$, $y\in B_u(x)$, and $y\notin B_x(u)$, then $b_u(y)=0$.

Now set
$$\Psi_x^R(y):=1-\max_{u\in U(x,R)}\frac{b_u(y)}{\mathcal{H}^{1/\e}(T[x,u])},$$
with the convention that the maximum is $0$ if $U(x,R)=\emptyset$. By \eqref{eq: b_u is 1-Lip}, for each $u\in U(x,r)$,
\begin{align*}
\left| \frac{b_u(y_1)}{\mathcal{H}^{1/\e}(T[x,u])} - \frac{b_u(y_2)}{\mathcal{H}^{1/\e}(T[x,u])} \right| \leq \frac{\mathcal{H}^{1/\e}([y_1,y_2])}{\mathcal{H}^{1/\e}(T[x,u])} \leq C_T R^{-1/\e}\mathcal{H}^{1/\e}([y_1,y_2]).
\end{align*}
Hence, by
%Since $\mathcal{H}^{1/\e}(T[x,u])\ge C_T^{-1}R^{\frac1\eps}$ for every $u\in U(x,R)$, the maximum is taken over a controlled number of{\color{blue}$C_T R^{-\frac1\eps}$-Lipschitz} functions with respect to the arc measure. Hence, by \eqref{eq: b_u is 1-Lip} and
Lemma \ref{lem:arcwise-representatives}, there exists $\partial\Psi_x^R:S\to \R$ such that for all $n\in\N$ and $(u_1,u_2)\in E_n$,
$$\Psi_x^R(u_2)-\Psi_x^R(u_1) = \int_{T[u_1,u_2]} \partial\Psi_x^R\,d\mathcal{H}^{1/\e}$$
and $|\partial\Psi_x^R(z)|\le C_T R^{-\frac1\eps}$ for $\mathcal{H}^{1/\e}$-a.e.~$z\in T$.

Since $0\le b_u(y)\le\mathcal{H}^{1/\e}([x,u])$, we have $0\le\Psi_x^R\le1$. Fix $c_1>0$ small enough so that $C_T^2c_1^{\frac1\eps}\le 1/2$. If $d(x,y)\le c_1R$, then
$$ b_u(y)\le\mathcal{H}^{1/\e}(T[x,y])\le C_Tc_1^{\frac1\eps}R^{\frac1\eps} \le \frac12 C_T^{-1}R^{\frac1\eps} \le \frac12\mathcal{H}^{1/\e}(T[x,u])$$
for every $u\in U(x,R)$, and hence $\Psi_x^R(y)\ge1/2$. Also, choose $c_2\ge1$ large enough so that $C_T^{-2}c_2^{\frac1\eps}\ge2$. If $d(x,y)\ge c_2R$, let $u$ be the unique point in $T[x,y] \cap \partial B(x,R)$, then
\begin{align*}
d(u,y)^{\frac1\eps}&\ge C_T^{-1}\mathcal{H}^{1/\e}(T[u,y])\\
&= C_T^{-1}\left(\mathcal{H}^{1/\e}(T[x,y])-\mathcal{H}^{1/\e}(T[x,u])\right)\\ 
&\ge C_T^{-2}d(x,y)^{\frac1\eps}-R^{\frac1\eps}\\ 
&\ge (C_T^{-2}c_2^{\frac1\eps}-1)R^{\frac1\eps}\\
&\geq R^{\frac1\eps}
\end{align*}
and so $u\in U(x,R)$, and $b_u(y)=\mathcal{H}^{1/\e}([x,u])$. Therefore, $\Psi_x^R(y)=0$.
Since $y$ is arbitrary, we have $\Psi_x^R=0$ on $T\setminus B(x,c_2 R)$.

It remains to determine the support of $\partial\Psi_x^R$. For fixed $u$, the function $b_u$ is constant on each component of $T\setminus T[x,u]$. This implies that the function $y \mapsto \max_{u\in U(x,R)} b_u(y)/\mathcal{H}^{1/\e}(T[x,u])$
is constant on each component of
$$T\setminus \bigcup_{u\in U(x,R)}T[x,u].$$
By the last part of Lemma~\ref{lem:arcwise-representatives}, we have that $\partial\Psi_x^R=0$ for $\mathcal{H}^{1/\e}$-a.e.~point in $S\setminus\bigcup_{u\in U(x,R)}T[x,u]$. Since the cardinality of $U(x,R)$ is uniformly bounded, the number of arcs in this union is also uniformly bounded, and each has $\mathcal{H}^{1/\e}(T[x,u])\le C_T R^{\frac1\eps}$. Therefore, there exists $c_3$ depending only on $C_T$, $\e$, $Q$, and $C_A$ such that
$$\mathcal{H}^{1/\e}\left(\{z\in S:\partial\Psi_x^R(z)\neq0\}\right)\le c_3 R^{\frac1\eps}.$$
Moreover, if $z\in T[x,u]$ for some $u\in U(x,R)$, then
$$ d(x,z)^{\frac1\eps}\le C_T\mathcal{H}^{1/\e}(T[x,z])\le C_T\mathcal{H}^{1/\e}(T[x,u])\le C_T^2R^{\frac1\eps},$$
and hence the support of $\partial\Psi_x^R$ is contained in $\overline{B(x,c_2R)}$ for the aforementioned large $c_2$. 
%As a result, \[\int_S|\partial\Psi_x^R|^p\,d\mathcal{H}^{1/\e} \le C R^{-\frac1\eps(p-1)}. \qedhere\]
\end{proof}

\begin{lem}\label{lem:snowtree-partition}
There exists $C_2 \geq 1$ depending only on $C_T$, $\e$, $Q$, and $C_A$ such that for every
$0<R<R_0$, there exist finitely many points $x_i\in T$ and functions $\phi_i^R\in C(T)$ satisfying:
\begin{enumerate}
\item the balls $B_i:=B(x_i,C_2^{-1}R)$ cover $T$, and each point of $T$ belongs to at most $N=N(C_2)\in \N$ balls $B_i$, and the points $\{x_i\}$ are $C_2^{-1}R/2$-separated;
\item $0\le\phi_i^R\le1$ and $\sum_i\phi_i^R=1$ on $T$;
\item $\phi_i^R=0$ on $T\setminus B(x_i,C_2R)$;
\item for each $i$, there exists $\partial\phi_i^R:T\to \R$ with support contained in $S\cap \overline{B(x_i,C_2R)}$, such that, for every $n\in\N$ and every $(u,v)\in E_n$, we have
$$ \phi_i^R(v)-\phi_i^R(u) = \int_{T[u,v]}\partial\phi_i^R\,d\mathcal{H}^{1/\e}, $$
and
$$\int_S|\partial\phi_i^R|^p\,d\mathcal{H}^{1/\e}\le C_2 R^{-\frac1\eps(p-1)}.$$
\end{enumerate}
\end{lem}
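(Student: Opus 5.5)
The plan is the standard partition-of-unity construction built from the bump functions of Lemma \ref{lem:snowtree-cutoff}. Fix $0<R<R_0$ and let $C_1$ be the constant of Lemma \ref{lem:snowtree-cutoff}. Choose a maximal $C_1^{-1}R/2$-separated set $\{x_i\}\subset T$. By compactness it is finite, and by maximality the balls $B(x_i,C_1^{-1}R)$ cover $T$. Set $\Psi_i:=\Psi_{x_i}^R$ and
\[ \Phi:=\sum_i \Psi_i, \qquad \phi_i^R:=\Psi_i/\Phi. \]
Since every $y\in T$ lies in some $B(x_i,C_1^{-1}R)$, we get $\Phi\ge 1/2$ on $T$. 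Bounded overlap comes from Ahlfors regularity. If $y\in B(x_i,C_1R)$, the point $x_i$ lies in $B(y,C_1R)$. Disjoint balls of radius $C_1^{-1}R/4$ around such $x_i$ give at most $N\lesssim C_A^2 (4C_1^2)^Q$ indices. Consequently $\Phi\le N$, and properties (1)--(3) follow with $C_2\ge C_1$. Here $\sum_i\phi_i^R=1$ and $0\le\phi_i^R\le 1$ are immediate, and $\phi_i^R=0$ off $B(x_i,C_1R)$ because $\Psi_i$ vanishes there.

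For (4) I will apply Lemma \ref{lem:arcwise-representatives} to $h=\phi_i^R$. The hypothesis \eqref{eq:h Lip} follows from the stronger pointwise estimate \eqref{eq:h Lip2}, which holds for each $\Psi_j$ with constant $C_1R^{-1/\eps}$. Writing
\[ \phi_i^R(z)-\phi_i^R(w)=\frac{\Psi_i(z)-\Psi_i(w)}{\Phi(z)}+\Psi_i(w)\frac{\Phi(w)-\Phi(z)}{\Phi(z)\Phi(w)}, \]
we bound the first term using $\Phi\ge1/2$. In the second term only the $\Psi_j$ that are nonzero near $x_i$ contribute when $w\in B(x_i,C_1R)$; when $z,w$ both lie outside that ball the difference is zero.

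The main obstacle is locality. A global Lipschitz constant is not enough, because (4) needs $\partial\phi_i^R$ supported in $S\cap\overline{B(x_i,C_2R)}$ and with $L^p$ norm scaling like $R^{-(p-1)/\eps}$. For the support, I use the second part of Lemma \ref{lem:arcwise-representatives} with $K$ the finite union of the arcs $T[x_i,u]$, $u\in U(x_i,R)$, taken from the proof of Lemma \ref{lem:snowtree-cutoff}. The function $\Psi_i$ is constant on the components of $T\setminus K$. On a component where $\Psi_i\equiv 0$ the quotient is also $0$; on a component where $\Psi_i\not\equiv 0$, which lies inside $B(x_i,C_1R)$, the quotient can still vary through $\Phi$. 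So I will instead take $K_i$ to be the union of the arcs for all $j$ with $d(x_i,x_j)\le 2C_1R$. There are at most $N$ such $j$, each contributing a uniformly bounded number of arcs of $\mathcal H^{1/\eps}$-measure $\lesssim R^{1/\eps}$, all contained in $\overline{B(x_i,C R)}$.

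If that attempt fails, the fallback is direct. Compute $\partial\phi_i^R$ via the quotient rule from the $\partial\Psi_j$, which is valid arc by arc because each $\Psi_j\circ\gamma$ is Lipschitz. Then $\partial\phi_i^R$ vanishes wherever all the relevant $\partial\Psi_j$ vanish. Hence $\partial\phi_i^R$ is supported in the union of the supports of $\partial\Psi_j$ over $j$ near $i$, a set of measure $\le NC_1R^{1/\eps}$ contained in $S\cap\overline{B(x_i,3C_1R)}$. In either route $|\partial\phi_i^R|\lesssim NC_1R^{-1/\eps}$, and therefore
\[ \int_S|\partial\phi_i^R|^p\,d\mathcal H^{1/\eps}\lesssim R^{-p/\eps}R^{1/\eps}=R^{-(p-1)/\eps}. \]
Enlarging $C_2$ to absorb all constants completes the argument.
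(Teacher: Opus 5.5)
Your proposal is correct and follows essentially the paper's proof: a maximal separated net, $\phi_i^R=\Psi_{x_i}^R/\sum_j\Psi_{x_j}^R$ with $\sum_j\Psi_{x_j}^R\ge 1/2$, a quotient-rule Lipschitz bound fed into Lemma~\ref{lem:arcwise-representatives}, and locality of $\partial\phi_i^R$ via the arcs of the cutoffs. Your observation that $K$ must contain the arcs of all neighbouring $\Psi_{x_j}^R$, not only those of $\Psi_{x_i}^R$ (or else your arcwise quotient-rule fallback), supplies the detail the paper only gestures at. The one caveat, which the paper shares, is bookkeeping: enlarging $C_2$ at the end to absorb the energy constant would shrink the balls $B(x_i,C_2^{-1}R)$ below your $C_1^{-1}R/2$ net scale, so they need no longer cover $T$; the constant in (1) should therefore be kept separate from the one in (3)--(4), and that is all Proposition~\ref{prop:KS-to-discrete} actually uses.
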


\begin{proof}
Let $C_1\ge1$ be the constant from Lemma \ref{lem:snowtree-cutoff}. Fix 
\[ c<\min\{R_0,C_1^{-1}\}.\] 
Let ${x_i}$ be a maximal $(cR/2)$-separated subset of $T$. By compactness of $T$, this set is finite. By maximality, the balls $B(x_i,cR)$ cover $T$. Moreover, if $x\in T$ is an arbitrary point, by Ahlfors regularity and the $(cR/2)$-separation of $x_i$'s, there exists $N\in\N$ depending only on $C_T$, $\e$, $Q$, and $C_A$ such that $x$ is contained in at most $N$ many balls $B(x_i,cR/2)$. This yields (1).

%Let$$\Psi_i:=\Psi_{x_i}^R$$
Recall the functions $\Psi_x^R$ defined in Lemma \ref{lem:snowtree-cutoff}. Since $c<C_1^{-1}$ and the union of the balls $B(x_i,cR)$ covers $T$, Lemma \ref{lem:snowtree-cutoff}(2) gives
\begin{equation}\label{eq: H sum def low bound}
H(u):=\sum_j\Psi_{x_j}^R(u)\ge\frac12,
\end{equation}
for all $u\in T$. Define
$$\phi_i^R:=\frac{\Psi_{x_i}^R}{H}:T\to\R. $$
Then (2) and (3) follow directly by Lemma \ref{lem:snowtree-cutoff}(1),(3).

It remains to show (4). Let $n\in \N$ and $(u,v)\in E_n$. By Lemma \ref{lem:snowtree-cutoff}(4) we have that
$$|\Psi_{x_i}^R(v)-\Psi_{x_i}^R(u)|\leq C_1R^{-\frac1\eps}\mathcal{H}^{1/\e}(T[u,v]),$$
for all $i$. This, Lemma \ref{lem:snowtree-cutoff}(1), and \eqref{eq: H sum def low bound} imply that
\begin{align*}
|\phi_i^R(u)-\phi_i^R(v)|
&= \left|\frac{\Psi_{x_i}^R(u)}{H(u)}- \frac{\Psi_{x_i}^R(v)}{H(v)}\right|\\
&\leq  \frac{|\Psi_{x_i}^R(u)-\Psi_{x_i}^R(v)|}{H(u)}+\frac{|H(v)\Psi_{x_i}^R(v)-H(u)\Psi_{x_i}^R(v)|}{H(u)H(v)}\\
&\leq 2C_1 R^{-\frac1\eps}\mathcal{H}^{1/\e}(T[u,v])+4|\Psi_{x_i}^R(v)||H(v)-H(u)|.
\end{align*}
However, by another application of Ahlfors regularity, along with (1), only at most finitely many terms are non-zero in the sums $H(u), H(v)$, with the upper bound on the terms depending only on $C_T$, $\e$, $Q$, and $C_A$. This and the above yield that
$$ |\phi_i^R(u)-\phi_i^R(v)|\lesssim R^{-\frac1\eps} \mathcal{H}^{1/\e}(T[u,v]),$$ 
which by Lemma \ref{lem:arcwise-representatives} and choice of $\phi_i^R$ are enough to complete the proof of (4), using the same arguments as in the proof of Lemma \ref{lem:snowtree-cutoff}(5).
\end{proof}

The next proposition completes the proof of Theorem \ref{thm:discrete-KS-equivalence}.

\begin{prop}\label{prop:KS-to-discrete}
Let $1<p<\infty$. If $f\in KS^{1,p}_{\alpha_p}(T)$, then $f\in\cW^{1,p}(T)$ and
$$ \cE_{T,\mathscr{V}}^p(f) \le C\liminf_{R\to0^+}E_{p,\a_p}(f,R).$$
The constant $C$ depends only on $p$, $C_T$, $Q$, $C_A$, and $\eps$.
\end{prop}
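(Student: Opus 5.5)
The plan is to approximate $f$ at scale $R$ by a smooth-along-arcs function built from the partition of unity of Lemma \ref{lem:snowtree-partition}, estimate the discrete energy of the approximation by $E_{p,\a_p}(f,R)$, and then let $R\to0$. Fix $R\in(0,R_0)$ and take the points $x_i$ and functions $\phi_i^R$ from Lemma \ref{lem:snowtree-partition}. Set $f_i:=\frac1{\mu(B_i)}\int_{B(x_i,C_2R)} f\,d\mu$ (the average over the enlarged ball) and $f_R:=\sum_i f_i\phi_i^R$. Since $f$ is continuous on the compact $T$, it is uniformly continuous, so $f_R\to f$ uniformly as $R\to0$.

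The first step is a gradient bound for $f_R$. Since $\sum_i\phi_i^R=1$, we have $\sum_i\partial\phi_i^R=0$ a.e. (apply Lemma \ref{lem:arcwise-representatives} to the constant function and use the uniqueness of the constructions, or simply subtract the identity $\sum_i(\phi_i^R(v)-\phi_i^R(u))=0$). Hence, for $z\in S$ and any fixed index $j$ with $z\in B(x_j,C_2R)$, we have $\partial f_R(z)=\sum_i (f_i-f_j)\partial\phi_i^R(z)$. Only $N$ indices contribute, and for these $d(x_i,x_j)\lesssim R$. Consequently $|\partial f_R(z)|^p\lesssim \sum_{i\sim j}|f_i-f_j|^p|\partial\phi_i^R(z)|^p$, where $i\sim j$ means that the enlarged balls intersect. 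Summing the estimates of Lemma \ref{lem:snowtree-partition}(4) with bounded overlap gives
\[\int_S|\partial f_R|^p\,d\mathcal H^{1/\e}\lesssim R^{-\frac{p-1}{\e}}\sum_{i\sim j}|f_i-f_j|^p.\]
By Jensen's inequality and Ahlfors regularity, $|f_i-f_j|^p\lesssim R^{-2Q}\int_{B'}\int_{B'}|f(x)-f(y)|^p\,d\mu\,d\mu$, where $B'=B(x_i,CR)$. Summing over the $\lesssim$-overlapping pairs yields $\lesssim R^{-Q}\int_T\int_{B(x,C'R)}|f(y)-f(x)|^p\,d\mu(y)\,d\mu(x)\simeq R^{p\a_p-\frac{p-1}{\e}}E_{p,\a_p}(f,C'R)$, using $p\a_p=Q+\frac{p-1}{\e}$. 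Altogether, $\int_S|\partial f_R|^p\lesssim E_{p,\a_p}(f,C'R)$.

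The second step passes to the limit. By the first half of Proposition \ref{prop: discrete-gradient}, which holds for any $g$ satisfying \eqref{eq:FTC} with $g=\partial f_R$, we get $\cE^p_{V_n,T}(f_R)\lesssim E_{p,\a_p}(f,C'R)$ for every $n$. For fixed $n$, the sum defining $\cE^p_{V_n,T}$ is finite, so uniform convergence $f_R\to f$ along a sequence $R_k\to0$ realizing the $\liminf$ gives $\cE^p_{V_n,T}(f)\lesssim\liminf_{R\to0}E_{p,\a_p}(f,C'R)=\liminf_{R\to0}E_{p,\a_p}(f,R)$. Taking the supremum over $n$ finishes the proof.

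The main obstacle is the first step: making rigorous that $\partial f_R=\sum_i f_i\partial\phi_i^R$ satisfies \eqref{eq:FTC}, which follows by linearity on each edge since $f_R$ is a finite sum, and justifying $\sum_i\partial\phi_i^R=0$ a.e. on $S$. For the latter I would avoid any identity for the $\partial\phi_i^R$ themselves and instead work edge by edge. Write $f_R(v)-f_R(u)=\sum_i(f_i-f_j)(\phi_i^R(v)-\phi_i^R(u))$ for edges of small length inside one ball, and reconstruct a derivative through Lemma \ref{lem:arcwise-representatives}, whose local Lipschitz hypothesis now holds with constant controlled by $R^{-1/\e}\max_{i\sim j}|f_i-f_j|$ locally. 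Edges longer than $R$ can be subdivided using finer $V_m$, since the energy bound only needs \eqref{eq:FTC} at every level.
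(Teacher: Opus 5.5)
Your proposal is correct. Its first step is essentially the paper's argument for $h_k$: you use $\sum_i\partial\phi_i^R=0$ a.e., rewrite $\partial f_R=\sum_i(f_i-f_j)\partial\phi_i^R$, and combine bounded overlap, Jensen and Ahlfors regularity to get $\int_S|\partial f_R|^p\,d\mathcal H^{1/\e}\lesssim E_{p,\a_p}(f,C'R)$.

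The genuine difference is the passage to the limit. The paper extracts a weakly convergent subsequence $h_k\rightharpoonup g$ in $L^p(S,\mathcal H^{1/\e})$, using reflexivity since $1<p<\infty$. It passes to the limit in $f_k(y)-f_k(x)=\int_{T[x,y]}h_k\,d\mathcal H^{1/\e}$ to show that $g$ satisfies \eqref{eq:FTC} for $f$. It then combines Proposition \ref{prop: discrete-gradient} with weak lower semicontinuity of the $L^p$ norm.

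You instead apply the H\"older half of Proposition \ref{prop: discrete-gradient} to each approximant, which gives $\cE^p_{V_n,T}(f_R)\lesssim E_{p,\a_p}(f,C'R)$ uniformly in $n$. You then use that each $\cE^p_{V_n,T}$ is a finite sum, so it passes to the limit under mere pointwise convergence on $V_n$. This is the same device the paper itself uses for capacity attainment in Theorem \ref{thm: upper cap bd}.

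What each route buys:
\begin{itemize}
\item Your route is more elementary: no weak compactness or reflexivity and no identification of a limiting gradient are needed.
\item The paper's route exhibits the gradient $g$ of $f$ directly as a weak limit of the approximants' gradients. That is only a minor gain, since Proposition \ref{prop: discrete-gradient} supplies such a $g$ anyway once $f\in\cW^{1,p}_{\mathscr V}(T)$.
\end{itemize}
The ``edge by edge'' fallback in your last paragraph is unnecessary. The identity $\sum_i\int_{T[u,v]}\partial\phi_i^R\,d\mathcal H^{1/\e}=0$ holds for all edges at all levels, since $\sum_i\phi_i^R\equiv1$. This already forces $\sum_i\partial\phi_i^R=0$ a.e.\ on $S$, which is how the paper argues.

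Two small slips need fixing.
\begin{itemize}
\item $f_i$ must be normalized by $\mu(B(x_i,C_2R))$, not by $\mu(B_i)=\mu(B(x_i,C_2^{-1}R))$. Otherwise $f_i$ is not an average and $f_R\not\to f$.
\item In the Jensen step the exponents are off. The correct bound is $\sum_{i\sim j}|f_i-f_j|^p\lesssim R^{-2Q}\int_T\int_{B(x,C'R)}|f(y)-f(x)|^p\,d\mu(y)\,d\mu(x)\simeq R^{p\a_p-Q}E_{p,\a_p}(f,C'R)=R^{\frac{p-1}{\e}}E_{p,\a_p}(f,C'R)$. It is this factor $R^{\frac{p-1}{\e}}$ that cancels the prefactor $R^{-\frac{p-1}{\e}}$, and your stated conclusion is right.
\end{itemize}
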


\begin{proof}
Set $L:=\liminf_{R\to0^+}E_{p,\a_p}(f,R)$, and choose a sequence $R_k\to0$ such that
$$\lim_{k\to \infty}E_{p,\a_p}(f,C^*R_k)=L,$$
where 
\[ C^* = 2C_2 + 2.\]
We may assume that $R_k<R_0$ for every $k$.

Let $\{\phi_i^{R_k}\}_i$ be the partition of unity from Lemma \ref{lem:snowtree-partition}. Write
$$B_i^k:=B(x_i^k,C_2^{-1}R_k), \quad f_i^k:=\frac1{\mu(B_i^k)}\int_{B_i^k}f\,d\mu,$$
and define
\begin{equation}\label{eq: def f_k in Prop KS-discrete}
    f_k:=\sum_i f_i^k\phi_i^{R_k}.
\end{equation}

For each $i$, let $\partial\phi_i^{R_k}$ be the function from Lemma \ref{lem:snowtree-partition}(4). In particular, for every $(x,y)\in E_n$ we have
$$\phi_i^{R_k}(y)-\phi_i^{R_k}(x) =\int_a \partial\phi_i^{R_k}\,d\mathcal{H}^{1/\e},$$
and 
$$\partial\phi_i^{R_k}=0, \quad \text{$\mathcal{H}^{1/\e}$-a.e.~on $S\setminus \overline{B(x_i^k,C_2R_k)}$.}$$
Define $h_k : S \to \R$ by
$$ h_k:=\sum_i f_i^k\,\partial\phi_i^{R_k}.$$
Since the sum is finite, for every $(x,y)\in E_n$ we have
\begin{align*}
\int_{T[x,y]} h_k\,d\mathcal{H}^{1/\e} = \sum_i f_i^k\int_{T[x,y]}\partial\phi_i^{R_k}\,d\mathcal{H}^{1/\e}  &= \sum_i f_i^k\bigl(\phi_i^{R_k}(y)-\phi_i^{R_k}(x)\bigr) \\
&= f_k(y)-f_k(x).
\end{align*}
In other words, $h_k$ behaves like a ``derivative'' for $f_k$.

We now show that $f_k\to f$ uniformly. If
$\phi_i^{R_k}(x)\neq0$, then $x\in B(x_i^k,C_2R_k)$ by Lemma \ref{lem:snowtree-partition}(3). Hence, $B_i^k\subset B(x,2C_2R_k)$. Therefore,
\begin{align*}
|f_i^k-f(x)| &= \left|\frac1{\mu(B_i^k)}\int_{B_i^k}\bigl(f(u)-f(x)\bigr)\,d\mu(u)\right| \\
&\le \frac1{\mu(B_i^k)} \int_{B_i^k}|f(u)-f(x)|\,d\mu(u) \\
&\le \sup\{|f(u)-f(x)|:\ u\in B_i^k\} \\
&\le \sup\{|f(u)-f(v)|:\ d(u,v)\le 4C_2R_k\}.
\end{align*}
Since $f$ is uniformly continuous on $T$, the above with \eqref{eq: def f_k in Prop KS-discrete} and Lemma \ref{lem:snowtree-partition}(2) gives
$$\|f_k-f\|_{L^\infty(T)}\to0.$$

We now estimate $h_k$. 
%Fix $k$ and ease the notation in the following by setting $R=R_k$, $x_i=x_i^k$, $B_i=B_i^k$, and $\phi_i=\phi_i^{R_k}$. Then,
%$$h_k=\sum_i f_i^k\,\partial\phi_i.$$
Set
$$D_{k}:=\sum_i\partial\phi_i^{R_k}.$$
For every $(x,y)\in E_n$ we have
\begin{align*}
\int_{T[x,y]} D_{k}\,d\mathcal{H}^{1/\e} = \sum_i \int_{T[x,y]}\partial\phi_i^{R_k}\,d\mathcal{H}^{1/\e} =\sum_i\bigl(\phi_i^{R_k}(y)-\phi_i^{R_k}(x)\bigr)&= 1-1=0.
\end{align*}
Thus, for every $n\in \N$ and every $(x,y)\in E_n$, $D_k$ is $\mathcal{H}^{1/\e}$-a.e.~equal to $0$ on $T[x,y]$. Since $S$ is a countable union of such arcs, we have that $D_k=0$  $\mathcal{H}^{1/\e}$-a.e.~on $S$.

Now let $j\neq i$, and  $\widetilde B_j^k:=B(x_j^k,C_2R_k)$. Since $D_k=0$, we get for $\mathcal{H}^{1/\e}$-a.e.~$z\in\widetilde B_j^k\cap S$ that
$$ h_k(z) = \sum_i f_i^k\,\partial\phi_i^{R_k}(z) = \sum_i(f_i^k-f_j^k)\partial\phi_i^{R_k}(z). $$
Moreover, if $z\in \widetilde B_j^k\cap S$ and $\partial\phi_i^{R_k}(z)\neq0$, then, after a potential modification on a null set, $z\in \overline{B(x_i^k,C_2R_k)}$. Hence, $d(x_i^k,x_j^k)\le 2C_2R_k$, implying that for $\mathcal{H}^{1/\e}$-a.e.~$z\in \widetilde B_j^k$, only indices $i$ with $d(x_i^k,x_j^k)\le 2C_2R_k$ contribute to the sum $h_k(z)$. By Lemma \ref{lem:snowtree-partition}(1), there are only at most uniformly many such indices. Therefore, by this and Lemma \ref{lem:snowtree-partition}(4), there exists $M_1$ depending only on $C_T$, $\e$, $Q$, and $C_A$ such that
\begin{align*}
\int_{\widetilde B_j^k\cap S}|h_k|^p\,d\mathcal{H}^{1/\e}  
&\le M_1 \sum_{i:\,d(x_i^k,x_j^k)\le 2C_2R_k} |f_i^k-f_j^k|^p \int_S|\partial\phi_i^{R_k}|^p\,d\mathcal{H}^{1/\e} \\
&\le C_2 M_1 R_k^{-\frac1\eps(p-1)} \sum_{i:\,d(x_i^k,x_j^k)\le 2C_2R_k} |f_i^k-f_j^k|^p .
\end{align*}
Since the balls $\widetilde B_j^k$ cover $T$, summing over all $j$ gives
\begin{equation}\label{eq: intS of h_k sums}
\int_S|h_k|^p\,d\mathcal{H}^{1/\e} \le M_1C_2 R_k^{-\frac1\eps(p-1)} \sum_j \sum_{i:\,d(x_i^k,x_j^k)\le 2C_2R_K} |f_i^k-f_j^k|^p .
\end{equation}

Let  $i,j$ be indices with $d(x_i^k,x_j^k)\leq 2C_2R_k$. By choice of $f_i^k, f_j^k$, and by Jensen's inequality and Ahlfors regularity
we have
\begin{align*}
|f_i^k-f_j^k|^p &\le \frac1{\mu(B_i^k)\mu(B_j^k)} \int_{B_i^k}\int_{B_j^k}|f(u)-f(v)|^p\,d\mu(u)\,d\mu(v) \\
&\le C_A^2 C_2^{2Q}R_k^{-2Q}\int_{B_i^k}\int_{B_j^k}|f(u)-f(v)|^p\,d\mu(u)\,d\mu(v).
\end{align*}
Using the above on \eqref{eq: intS of h_k sums} yields
\begin{equation}\label{eq: intS h_k sum Int}
\begin{aligned}
\int_S |h_k|^p\,d\mathcal{H}^{1/\e}
&\le M_1C_2^{1+2Q}C_A^2 R_k^{-\frac1\eps(p-1)-2Q}\\
&\qquad  \sum_j \sum_{i:\, d(x_i,x_j)\le 2C_2R_k} \int_{B_i^k}\int_{B_j^k} |f(u)-f(v)|^p\,d\mu(u)\,d\mu(v).
\end{aligned}
\end{equation}
If $u\in B_i^k$, $v\in B_j^k$, and $d(x_i^k,x_j^k)\le2C_2R_k$, then 
$$
d(u,v)\leq (2+2C_2)R_k = C^*R_k.
$$ Moreover, since the points $x_i^k, x_j^k$ are $C_2^{-1}R_k$ separated, by an application of Ahlfors regularity, for every fixed pair $(u,v)$ there are only at most uniformly many pairs $(i,j)$ such that $u\in B_i^k$, $v\in B_j^k$, and $d(x_i^k,x_j^k)\le 2C_1R$. As a result, there exists $M_2$ depending only on $C_T$, $\e$, $Q$, and $C_A$ such that
$$\sum_j\sum_{i:\,d(x_i^k,x_j^k)\le 2C_2R_k} \chi_{B_i^k}(u)\chi_{B_j^k}(v) \le M_2\chi_{B(v,C^*R_k)}(u).$$
By the above and an application of Fubini-Tonelli on \eqref{eq: intS h_k sum Int} we have
\begin{align*} 
\int_S&|h_k|^p\,d\mathcal{H}^{1/\e}\\ 
&\le M_3 R_k^{-\frac1\eps(p-1)-2Q} \int_T\int_{B(v,C^*R_k)} |f(u)-f(v)|^p\,d\mu(u)\,d\mu(v)\\
&\leq M_4 \frac1{R_k^{p\a_p}} \int_T \frac{1}{\mu(B(v,C^*R_k))} \int_{B(v,C^*R_k)} |f(u)-f(v)|^p\,d\mu(u)\,d\mu(v)\\
&= M_4E_{p,\a_p}(f,C^*R_k)
\end{align*}
recalling that $p\alpha_p=Q+\frac1\eps(p-1)$ and with $M_4=C_A(C^*)^QM_3$. Thus, since $E_{p,\a_p}(f,C^*R_k)\to L$, the sequence $(h_k)_k$ is bounded in $L^p(S,\mathcal{H}^{1/\e})$.

Since $1<p<\infty$, after passing to a subsequence we may assume that
$$h_k\rightharpoonup g\quad\text{weakly in $L^p(S,\mathcal{H}^{1/\e})$.}$$
Let $n\in \N$ and $(x,y)\in E_n$. For every $k\in \N$,
$$f_k(y)-f_k(x)=\int_{T[x,y]} h_k\,d\mathcal{H}^{1/\e}.$$
Since $f_k\to f$ uniformly and $p>1$, taking $k\to \infty$ yields
$$f(y)-f(x)=\int_a g\,d\mathcal{H}^{1/\e}.$$
Thus, by Proposition \ref{prop: discrete-gradient} we have that $f\in\cW^{1,p}(T)$ with
$$ \cE_{\mathscr{V},T}^{p}(f)\simeq\int_S|g|^p\,d\mathcal{H}^{1/\e}. $$
Finally, the weak lower semicontinuity of the $L^p$ norm gives
\begin{align*}
\cE_{\mathscr{V},T}^{p}(f) \simeq \int_S|g|^p\,d\mathcal{H}^{1/\e} \le \liminf_{k\to\infty}\int_S|h_k|^p\,d\mathcal{H}^{1/\e} &\le M_4\liminf_{k\to\infty}E_{p,\a_p}(f,C^*R_k) \\
&= M_4\liminf_{R\to0^+}E_{p,\a_p}(f,R).
\end{align*}
This completes the proof.
\end{proof}

%\begin{proof}[{Proof of Theorem \ref{thm:discrete-KS-equivalence}}]
%The implication $$\cW^{1,p}(T)\subset KS^{1,p}_{\alpha_p}(T)$$ and the estimate $$\limsup_{R\to0^+}\Phi_f(R)\le C\cE_{T,p}(f)$$ are due to Proposition~\ref{prop:discrete-to-KS}. The implication $$KS^{1,p}_{\alpha_p}(T)\subset \cW^{1,p}(T)$$ and the estimate $$\cE_{T,p}(f)\le C\liminf_{R\to0^+}\Phi_f(R)$$ are due to Proposition~\ref{prop:KS-to-discrete}. Combining the two estimates proves the equivalence of the spaces and the comparability of the energies.
%\end{proof}

\section{The critical Korevaar--Schoen exponent and capacity estimates}\label{sec: crit exp cap}
The goal of this section is to show that the number $\alpha_p$ is the critical Korevaar--Schoen exponent for any $Q$-Ahlfors regular $\eps$-snowtree $T$. Recall that the critical Korevaar--Schoen of $T$ is the largest regularity exponent $\alpha$ for which the Korevaar--Schoen space $KS^{1,p}_{\alpha}(T)$ contains non-constant continuous functions. 

Throughout this section we keep the assumptions and notation of Section~\ref{sec: KS same discrete}. In particular, $(T,d,\mu)$ is a $Q$-Ahlfors regular $\eps$-snowtree, with  $Q\geq 1$, and
$$\alpha_p =\frac Qp+\frac1\eps-\frac1{p\eps}.$$

 \begin{proof}[Proof of Theorem \ref{thm: critical-KS-exponent}]
Fix a multiscale partition $\mathscr{V}$ for $T$. Fix two distinct points $u,v\in T$ and set $e=T[u,v]$. For $z\in T\setminus e$, let
$\pi_e(z)$ be the unique point of $e$ such that
$$T[z,\pi_e(z)]\cap e=\{\pi_e(z)\}.$$
This point exists and is unique since $T$ is a tree and has no simple loops. For $z\in e$ set $\pi_e(z)=z$. Define $h:T\to \R$ by
$$h(z):=\mathcal{H}^{1/\e}(T[u,\pi_e(z)]).$$
Then $h(u)=0$ and $h(v)=\mathcal{H}^{1/\e}(T[u,v])>0$, so $h$ is non-constant.

Suppose that $\pi_e(z)\ne \pi_e(w)$. Then $z$ and $w$ lie on distinct connected components of $T\setminus e$. This implies that 
$$T[z,w]= T[z,\pi_e(z)]\cup T[\pi_e(z),\pi_e(w)]\cup T[\pi_e(w),w],$$
with the arcs on the right-hand size being essentially disjoint, i.e., the pairwise intersections contain at most one point. Hence, $T[\pi_e(z),\pi_e(w)]\subset T[z,w]$.
Therefore, 
\begin{equation}\label{eq:projection-Lip-nu}
|h(z)-h(w)|\le \mathcal{H}^{1/\e}(T[z,w]).
\end{equation} 
Note that if $\pi_e(z)= \pi_e(w)$, then the above inequality is trivially true and, thus, \eqref{eq:projection-Lip-nu} holds for all $z,w\in T$. In addition, since $\mathcal{H}^{1/\e}(T[z,w])=d(z,w)^{1/\e}$, we have by \eqref{eq:projection-Lip-nu} that $h\in C(T)$.

By \eqref{eq:projection-Lip-nu} and Lemma~\ref{lem:arcwise-representatives}, there exists a function $\partial h:S\to\R$ such that
$$h(y)-h(x)=\int_{T[x,y]}\partial h\,d\mathcal{H}^{1/\e},$$
for every $n\in\N$ and $(x,y)\in E_n$, and
$$|\partial h|\le 1 \quad \text{$\mathcal{H}^{1/\e}$-a.e.~on $S$}.$$
Moreover, $h$ is constant on each component of $T\setminus e$. Hence by Lemma~\ref{lem:arcwise-representatives}, $\partial h=0$ for $\mathcal{H}^{1/\e}$-a.e.~point of $S\setminus e$, which yields
$$\int_S|\partial h|^p\,d\mathcal{H}^{1/\e} = \int_{S\cap e}|\partial h|^p\,d\mathcal{H}^{1/\e} \le \mathcal{H}^{1/\e}(e) = d(u,v)^{\frac1\eps} <\infty.$$
By Proposition~\ref{prop: discrete-gradient}, the above implies that $h\in\cW^{1,p}_{\mathscr{V}}(T)$. As a result, by Theorem~\ref{thm:discrete-KS-equivalence} we also have that the non-constant function $h$ lies in
$KS^{1,p}_{\alpha_p}(T)$.

Let $0\le\alpha<\alpha_p$. Note that for every $R>0$, we have
$$ E_{p,\a}(h,R) = R^{p(\alpha_p-\alpha)}E_{p,\a_p}(h,R).$$
Since $\limsup_{R\to0^+}E_{p,\a_p}(h,R)<\infty$, it follows that
$$\limsup_{R\to0^+}E_{p,\a}(h,R)<\infty.$$
Thus $h\in KS^{1,p}_\alpha(T)$, and so $KS^{1,p}_\alpha(T)$ contains a
nonconstant function for every $0\le\alpha\le\alpha_p$.

On the other hand, let $\alpha>\alpha_p$ and let $f\in KS^{1,p}_\alpha(T)$. Then
$$E_{p,\a_p}(f,R) =R^{p(\alpha-\alpha_p)}E_{p,\a}(f,R).$$
Since $\limsup_{R\to0^+}E_{p,\a}(f,R)<\infty$ and $\alpha-\alpha_p>0$, we obtain
$$\limsup_{R\to0^+}E_{p,\a_p}(f,R)=0.$$
In particular, $f\in KS^{1,p}_{\alpha_p}(T)$, and by Theorem \ref{thm:discrete-KS-equivalence} we have
$$ \cE_{\mathscr{V},T}^p(f) \simeq \liminf_{R\to0^+}E_{p,\a_p}(f,R) = 0.$$
Applying Proposition~\ref{prop: Morrey-Sobolev} with $K=T$, we get for all $x,y\in T$ that
$$|f(x)-f(y)|^p \lesssim d(x,y)^{\tfrac{p-1}{\eps}}\cE_{\mathscr{V},T}^p(f) =0.$$
Thus, $f$ is constant. Since $f\in KS^{1,p}_\alpha(T)$ is arbitrary, this proves that for no exponent $\alpha>\alpha_p$ there are non-constant functions in $KS^{1,p}_\alpha(T)$.
\end{proof}

The rest of this section is devoted to the proof of Theorem \ref{thm: upper cap bd}.

\begin{proof}[Proof of Theorem \ref{thm: upper cap bd}]
Let $R_0$ be as in the beginning of Section \ref{sec: KS same discrete} and $C_1\ge1$ be the constant from Lemma~\ref{lem:snowtree-cutoff}. Let 
\begin{equation}\label{eq: cap A choice}
    A\ge \max\left\{C_1^2,\frac{C_1\diam T}{R_0} \right\} \geq 1.
\end{equation}
We first prove the result for $0<r<C_1^{-1}R_0$.
Set $R:=C_1r$, and let $\Psi:=\Psi_x^R$ be the function given by
Lemma~\ref{lem:snowtree-cutoff}. Thus, $0\le \Psi\le1$, $\Psi|_{B(x,r)}\geq 1/2$ and $\Psi|_{B(x,C_1^2r)}=0$. Let $\eta:\R\to\R$ be such that
%the $2$-Lipschitz function
$$ \eta(t):=
\begin{cases}
0, & t\le0,\\
2t, & 0<t<\frac12,\\
1, & t\ge \frac12.
\end{cases}$$
Define $f:=\eta\circ\Psi$. Then $f\in C(T)$, $0\le f\le1$, $f=1$ on $B(x,r)$, and $f=0$ on $T\setminus B(x,C_1^2r)$.

It remains to estimate the discrete energy of $f$. Let $\partial\Psi:S\to\R$ be the function from Lemma~\ref{lem:snowtree-cutoff}. Set $G_R:=\{z\in S:\partial\Psi(z)\ne0\}$. By Lemma~\ref{lem:snowtree-cutoff},
\begin{equation}\label{eq: cap supp meas}
\mathcal{H}^{1/\eps}(G_R)\le CR^{1/\eps }
\end{equation}
and
\begin{equation}\label{eq: cap deriv bd}
|\partial\Psi|\le C_1 R^{-1/\eps } \quad \text{$\mathcal{H}^{1/\e}$-a.e. on $S$}.
\end{equation}
Fix $n\in\N$ and $(u,v)\in E_n$. Since $\eta$ is $2$-Lipschitz, Lemma~\ref{lem:snowtree-cutoff} and  \eqref{eq: cap deriv bd} yield
\begin{align*}
|f(u)-f(v)| \le 2|\Psi(u)-\Psi(v)| &\le 2\int_{T[u,v]} |\partial\Psi|\,d\mathcal{H}^{1/\e}\\
&\le 2C_1 R^{-1/\eps }\mathcal{H}^{1/\eps}(T[u,v]\cap G_R).
\end{align*}

Recall the discrete energies $\mathcal{E}_{V_n,T}^p$ from \eqref{eq:discreteenergy}. Since $T$ is a snowtree, $\mathcal{H}^{1/\eps}(T[u,v])\simeq d(u,v)^{1/\eps }$, which along with the above implies
\begin{align*}
\cE_{V_n,T}^p(f) &= \sum_{(x,y)\in E_n} \frac{|f(x)-f(y)|^p}{d(x,y)^{(p-1)/\eps}}\\
&\lesssim \sum_{(x,y)\in E_n} \frac{|f(x)-f(y)|^p}{\mathcal{H}^{1/\eps}(T[x,y])^{p-1}}\\
&\lesssim R^{-p/\eps} \sum_{(x,y)\in E_n} \frac{\mathcal{H}^{1/\eps}(T[x,y]\cap G_R)^p}{\mathcal{H}^{1/\eps}(T[x,y])^{p-1}}\\
&\le R^{-p/\eps } \sum_{(x,y)\in E_n}\mathcal{H}^{1/\eps}(T[x,y]\cap G_R).
\end{align*}

Hence, because distinct arcs $T[x,y],T[x',y']$ with $(x,y),(x',y')\in E_n$ intersect only at endpoints, which are countably many, the above and \eqref{eq: cap supp meas} yield
$$\cE_{V_n,T}^p(f) \le R^{-p/\eps}\mathcal{H}^{1/\eps}(G_R) \lesssim R^{-p/\eps}R^{1/\eps} = R^{-(p-1)/\eps}.$$
The implicit constants are independent of $n$. Taking the supremum over
$n\in\N$ and recalling that $R=C_1r$ gives
\begin{equation}\label{eq: cap small}
\cE_{\mathscr{V},T}^p(f)\lesssim R^{-(p-1)/\eps} \lesssim r^{-(p-1)/\eps}.
\end{equation}
In particular $f\in\cW^{1,p}_{\mathscr{V}}(T)$.
Since 
$$p\alpha_p=Q+(p-1)/\eps,$$
and $\mu$ is $Q$-Ahlfors regular, \eqref{eq: cap small} yields
$$\cE_{\mathscr{V},T}^{p}(f) \lesssim \mu(B(x,r))r^{-p\alpha_p}.$$
This completes the proof for $0<r<C_1^{-1}R_0$, due to \eqref{eq: cap A choice}.

The case $C_1^{-1}R_0\le r<\infty$ remains. Note that due to \eqref{eq: cap A choice}, we have $Ar\ge \diam T$ for all $r\ge C_1^{-1}R_0$. Then $T\subset B(x,Ar)$ for every $x\in T$ and every $r\ge C_1^{-1}R_0$. In this case take $f\equiv1$. Then $f|_{B(x,r)}=1$, $f=0$ on $T\setminus B(x,Ar)=\emptyset$, and $\cE_{\mathscr{V},T}^{p}(f)=0$. Therefore, the desired estimate is trivially true.

It remains to show that the infimum in $\operatorname{Cap}_{p,\mathscr{V}}(x,r;A)$ is attained. Fix $x\in T$, $r>0$. The attainment of $\operatorname{Cap}_{p,\mathscr{V}}(x,r;A)$ is trivial for $r\ge C_1R_0$, so suppose $r<C_1 R_0$. Let $(f_j)_j$ be a sequence of functions $f_j:T\to \R$ so that
$$f_j|_{B(x,r)}=1,\quad f_j|_{T\setminus B(x,Ar)}=0,$$ 
for all $j$, and $\cE_{\mathscr{V},T}^{p}(f_j)\to \operatorname{Cap}_{p,\mathscr{V}}(x,r;A)$. After potentially passing to a subsequence, we may assume that for every $j$ we have
$$\cE_{\mathscr{V},T}^{p}(f_j)\le 2\operatorname{Cap}_{p,\mathscr{V}}(x,r;A).$$

We may further reduce to the case $0\le f_j\le1$. Indeed, let $\Tilde{\eta}:\R\to \R$ be the $1$-Lipschitz function with
$$\Tilde{\eta}(t):=
\begin{cases}
0, & t\le0,\\
t, & 0<t<1,\\
1, & t\ge1.
\end{cases}$$
Hence, for every $n\in\N$ and every
arc $(y_1,y_2)\in E_n$,
$$ |\Tilde{\eta}(f_j(y_1))-\Tilde{\eta}(f_j(y_2))| \le |f_j(y_1)-f_j(y_2)|,$$
which implies
$$ \cE_{V_n,T}^{p}(\Tilde{\eta}\circ f_j) \le \cE_{V_n,T}^{p}(f_j),$$
for all $n\in \N$, and so
$$ \cE_{\mathscr{V},T}^{p}(\Tilde{\eta}\circ f_j) \le \cE_{\mathscr{V},T}^{p}(f_j).$$
Moreover, $\Tilde{\eta}\circ f_j$ satisfies $\Tilde{\eta}\circ f_j=1$ on
$B(x,r)$ and $\Tilde{\eta}\circ f_j=0$ on $T\setminus B(x,Ar)$. By replacing $f_j$ by $\Tilde{\eta}\circ f_j$, we may therefore assume that $0\le f_j\le1$ on $T$ for every $j$.

By Proposition~\ref{prop: Morrey-Sobolev}, we have for all $y,z\in T$ that
$$|f_j(y)-f_j(z)|^p \lesssim d(y,z)^{(p-1)/\eps}\cE_{\mathscr{V},T}^{p}(f_j) \lesssim d(y,z)^{(p-1)/\eps} \operatorname{Cap}_{p,\mathscr{V}}(x,r;A),$$
Thus, by the above and by \eqref{eq: cap upper estimate} we have that $(f_j)_j$ is equicontinuous. Due to $0\le f_j\le1$, by
the Arzela--Ascoli theorem and after potentially passing to a subsequence we ahve that $f_j\to u$ uniformly on $T$, for some  $u\in C(T)$.
The uniform convergence gives
\begin{equation}\label{eq: cap u is admis}
u|_{B(x,r)}=1 \quad \text{and} \quad u|_{T\setminus B(x,Ar)}=0.
\end{equation}

It remains to show that $u\in\cW^{1,p}_{\mathscr{V}}(T)$ and that it minimizes the capacity. Fix $n\in\N$. Since $\cE_{V_n,T}^{p}$ is a finite sum, the uniform convergence implies
$$ \cE_{V_n,T}^p(u)=\lim_{j\to\infty}\cE_{V_n,T}^p(f_j)\leq \lim_{j\to\infty}\cE_{\mathscr{V},T}^p(f_j)=\operatorname{Cap}_{p,\mathscr{V}}(x,r;A).$$
Taking the supremum over $n$ the above yields
$$ \cE_{\mathscr{V},T}^p(u)\le \operatorname{Cap}_{p,\mathscr{V}}(x,r;A).$$
In particular, $u\in\cW^{1,p}_{\mathscr{V}}(T)$, and due to \eqref{eq: cap u is admis} we also have
$$\operatorname{Cap}_{p,\mathscr{V}}(x,r;A)\le \cE_{\mathscr{V},T}^{p}(u),$$
completing the proof.
\end{proof}

Similarly, one can define the Korevaar-Schoen $p$-capacity to be
\begin{align*}
&\operatorname{Cap}_{p,KS}(x,r,A)\\  
&:=\inf\left\{\limsup_{R\to0^+}E_{p,\a_p}(f,R): f\in KS^{1,p}(T),\ f|_{B(x,r)}=1,\ f|_{T\setminus B(x,Ar)}=0 \right\}.
\end{align*}

Theorem \ref{thm:discrete-KS-equivalence} in conjunction with Theorem \ref{thm: upper cap bd} yield the following corollary.

\begin{cor}\label{cor: upper cap KS}
Let $(T,d,\mu)$ be a $Q$-Ahlfors regular $\eps$-snowtree and let $1<p<\infty$. There exists $A>1$ such that for every
$x\in T$ and $0<r<\infty$, 
%there exists $f\in KS^{1,p}(T)$ satisfying $f=1$ on $B(x,r)$, $f=0$ on $T\setminus B(x,Ar)$, and
$$
\operatorname{Cap}_{p,KS}(x,r,A)\lesssim\frac{\mu(B(x,r))}{r^{p\alpha_p}}.
$$
\end{cor}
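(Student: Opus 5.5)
The plan is to transfer the discrete capacity estimate of Theorem~\ref{thm: upper cap bd} to the Korevaar--Schoen setting by means of the energy comparison of Theorem~\ref{thm:discrete-KS-equivalence}. Fix a multiscale partition $\mathscr{V}$ of $T$; such a partition exists, as remarked after its definition. Let $A>1$ be the constant that Theorem~\ref{thm: upper cap bd} provides for $\mathscr{V}$ and $p$. For $x\in T$ and $0<r<\infty$, we need only exhibit one admissible competitor for $\operatorname{Cap}_{p,KS}(x,r,A)$ whose Korevaar--Schoen energy is bounded by a multiple of $\mu(B(x,r))r^{-p\alpha_p}$.

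The first step is to take the minimizer $u$ of $\operatorname{Cap}_{p,\mathscr{V}}(x,r,A)$, whose existence is part of Theorem~\ref{thm: upper cap bd}. If one prefers not to use attainment, any admissible function with energy at most twice the infimum will do. This $u$ lies in $\cW^{1,p}_{\mathscr{V}}(T)$, satisfies $u|_{B(x,r)}=1$ and $u|_{T\setminus B(x,Ar)}=0$, and obeys
\[
\cE^p_{\mathscr{V},T}(u)\lesssim \mu(B(x,r))\,r^{-p\alpha_p}.
\]
The second step is to apply Proposition~\ref{prop:discrete-to-KS}, which is the relevant half of Theorem~\ref{thm:discrete-KS-equivalence}. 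It gives $u\in KS^{1,p}_{\alpha_p}(T)$ together with
\[
\limsup_{R\to 0^+}E_{p,\alpha_p}(u,R)\le C\,\cE^p_{\mathscr{V},T}(u),
\]
where $C$ depends only on $p,Q,C_A,\eps$. Since $u$ has the required boundary values, it is admissible for $\operatorname{Cap}_{p,KS}(x,r,A)$. Chaining the two inequalities then gives the claimed bound.

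The only point that needs care is bookkeeping rather than analysis. We must make sure that the constant $A$ and the implicit constants do not depend on $x$ or $r$. We should also check that in the regime $r\ge C_1^{-1}R_0$ the competitor $u\equiv 1$ is admissible in the Korevaar--Schoen sense; it is, since its KS energy is $0$. Because Theorem~\ref{thm: upper cap bd} already supplies uniform constants and $A$ depends only on the data of $T$ and the fixed partition, no genuine obstacle is expected. The resulting $A$ may depend on the chosen $\mathscr{V}$, but the statement only asserts the existence of some $A>1$, so this is harmless.
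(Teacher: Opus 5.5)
Your proposal is correct and follows the same route as the paper. The paper derives the corollary in one line from Theorem~\ref{thm:discrete-KS-equivalence} together with Theorem~\ref{thm: upper cap bd}; you make this explicit by taking the attained discrete capacity minimizer and applying the discrete-to-Korevaar--Schoen bound of Proposition~\ref{prop:discrete-to-KS}, and your remarks on the uniformity of constants, the dependence of $A$, and the large-$r$ regime are accurate.
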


\section{Final remarks}\label{sec:remarks}

The equivalence of the two Sobolev notions in Theorem \ref{thm:discrete-KS-equivalence} in the context of Ahlfors regular snowtrees paves the way for various other directions. One natural follow-up would be based on our manuscript and the work of Baudoin, Chen and Yang in \cite{BaudLocalGeodTrees}. Namely, one of their results was a  characterization of the Korevaar-Schoen functions on Ahlfors regular locally geodesic trees, and utilizing that to prove heat kernel estimates and interpolation results. It would be natural to adjust our arguments for the proof of Theorem \ref{thm:discrete-KS-equivalence} in the setting of Ahlfors regular  \textit{locally} snowtrees and build a similar probabilistic and interpolation framework in this setting.

Perhaps a more ambitious direction would be to aim for a Sobolev equivalence similar to that of Theorem \ref{thm:discrete-KS-equivalence} on general quasiconformal trees $T$. Various parts of our arguments relied on the fact that all arcs are uniformly $\eps$-snowflakes, as well as the Ahlfors regularity of the whole space. We expect some partial results to follow using our methods, such as for Ahlfors regular trees whose arcs are $\delta$-snowflakes with $\delta\in \{\eps_1,\dots, \eps_m\}$, for some fixed $m\in \N$. Moreover, we expect it to be possible to also weaken the Ahlfors regularity to a slightly different volume-control condition and still attain partial comparability results in terms of the discrete energy and Korevaar-Schoen spaces. However, to address such a compatibility of spaces in a more general setting of quasiconformal trees different methods would have to be employed.

Last but not least, it would be interesting to investigate what other Sobolev notions coincide with the discrete energy Sobolev functions on Ahlfors regular snowtrees. For instance, what Haj{\l}asz-Sobolev spaces would coincide, if any, with $\cW^{1,p}(T)$? In the case of the Vicsek fractal, it is shown in \cite{BaudChenVicsek} that certain fractional Haj{\l}asz-Sobolev spaces are essentially trivial, but there is no mention of other exponents which could provide equivalent classes to the discrete energy or Korevaar-Schoen class.

\appendix

\section{Snowtrees are snowflakings of geodesic trees}\label{app}

We give an equivalent definition for snowtrees. Recall that given $\d>0$, a homeomorphism $f : (X,d_X) \to (Y,d_Y)$ is \emph{$\d$-bi-H\"older} if
\begin{equation*}
L^{-1} d_X(x,y)^{\d} \leq d_Y(f(x),f(y)) \leq L d_X(x,y)^{\d}, \quad\text{for all $x,y\in X$}.
\end{equation*}
%given a metric space $(X,d)$ and $\delta\in (0,1]$, the pair $(X,d^\d)$ (called the \emph{$\d$-snowflaking} of $(X,d)$) is also a metric space. 
We show in the next proposition that $\e$-snowtrees are bi-H\"older equivalent to geodesic trees; in particular, 1-snowtrees are bi-Lipschitz equivalent to geodesic trees. A similar result for arcs instead of trees can be found in \cite{HerMa}.

\begin{prop}\label{prop:snow1}
A tree $T$ is an $\eps$-snowtree if and only if there exists a geodesic tree $T'$ and an $\e$-bi-H\"older homeomorphism $f: T' \to T$.
%it is bi-Lipschitz equivalent to the $\e$-snowflaking of a geodesic tree.
\end{prop}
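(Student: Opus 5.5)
The plan is to build the geodesic metric directly from Hausdorff measure along arcs in the forward direction, and to transport the snowtree condition through the bi-H\"older map in the reverse direction.

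\emph{Forward direction.} Suppose $T$ is an $\eps$-snowtree and define on $T$
\[ d'(x,y) := \mathcal{H}^{1/\e}(T[x,y]). \]
The first step is to show that $d'$ is a metric. It is symmetric. It is positive for $x\neq y$, since then $d'(x,y)\simeq d(x,y)^{1/\eps}$. For the triangle inequality, the tree structure gives $T[x,z]\subset T[x,y]\cup T[y,z]$, so subadditivity of the measure yields $d'(x,z)\le d'(x,y)+d'(y,z)$.

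The snowtree condition $C_T^{-1}d^{1/\eps}\le d'\le C_T d^{1/\eps}$ then says exactly that the identity map $(T,d')\to(T,d)$ is $\eps$-bi-H\"older, with constant $L=C_T^{\eps}$. In particular it is a homeomorphism, so $T'=(T,d')$ is a compact tree with the same arcs as $T$.

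It remains to see that $T'$ is geodesic. Take $x,y\in T$. The map $z\mapsto d'(x,z)$ restricted to $T[x,y]$ is continuous and strictly increasing along the arc. It is increasing because $\mathcal H^{1/\e}$ of a nondegenerate subarc is positive. It is continuous because $\mathcal H^{1/\e}(T[z,z'])\lesssim d(z,z')^{1/\eps}\to0$. Since $T[x,z]\cup T[z,w]=T[x,w]$ with overlap a single point, which has measure zero, we have additivity:
\[ d'(x,w)=d'(x,z)+d'(z,w) \quad\text{for } z\in T[x,w]. \]
Hence $t\mapsto$ (the point $z$ of $T[x,y]$ with $d'(x,z)=t$) is an isometric embedding $[0,d'(x,y)]\to T'$, which is the geodesic we need. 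This is the same parametrization $\gamma_{x,y}$ used in Lemma \ref{lem:arcwise-representatives}.

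\emph{Reverse direction.} Let $f:T'\to T$ be $\eps$-bi-H\"older with $T'$ a geodesic tree. Since $f$ is a homeomorphism, it maps arcs to arcs, so $T[f(a),f(b)]=f(T'[a,b])$. In a geodesic tree, the arc $T'[a,b]$ is the geodesic segment, so it is isometric to $[0,d'(a,b)]$ and $\mathcal H^1(T'[a,b])=d'(a,b)$.

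A map that is $\eps$-bi-H\"older distorts Hausdorff measures in a controlled way: coverings by sets of diameter $r$ correspond to coverings by sets of diameter comparable to $r^{\eps}$. Therefore
\[ \mathcal H^{1/\eps}(f(A))\simeq \mathcal H^{1}(A) \]
for every $A\subset T'$, with constants depending on $L$ and $\eps$. Applying this with $A=T'[a,b]$ gives
\[ \mathcal H^{1/\eps}(T[f(a),f(b)])\simeq d'(a,b)\simeq d(f(a),f(b))^{1/\eps}. \]
So $T$ is an $\eps$-snowtree, and $T$ is a metric tree because it is homeomorphic to one.

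\emph{Main obstacle.} The only step that needs care is the Hausdorff measure distortion under bi-H\"older maps. For $\mathcal H^{1/\eps}(f(A))\lesssim\mathcal H^1(A)$, I would push forward $\delta$-covers $\{U_i\}$ of $A$: the images satisfy $\diam f(U_i)\le L(\diam U_i)^{\eps}$, so $\sum(\diam f(U_i))^{1/\eps}\le L^{1/\eps}\sum \diam U_i$. The reverse inequality follows by applying the same argument to $f^{-1}$, which is $(1/\eps)$-H\"older with exponent sending $\mathcal H^{1/\eps}$ to $\mathcal H^1$.

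Everything else in both directions is tree combinatorics, namely the uniqueness of arcs and the fact that two arcs meeting at an endpoint overlap only there.
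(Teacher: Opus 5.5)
Your proof is correct and follows essentially the same route as the paper: you set $\rho(x,y)=\mathcal{H}^{1/\eps}(T[x,y])$ and show it is a metric comparable to $d^{1/\eps}$ that is additive along arcs (hence geodesic), and for the converse you compare Hausdorff measures through the bi-H\"older map. The differences are cosmetic: your triangle inequality via $T[x,z]\subset T[x,y]\cup T[y,z]$ and subadditivity replaces the paper's four-case analysis, and your explicit isometric parametrization of arcs replaces its computation of arc length.
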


\begin{proof}
We first assume that $(T,d)$ is an $\eps$-snowtree. Define $\rho:T\times T\to \R$ with $\rho(x,y)=\mathcal{H}^{1/\eps}(T[x,y])$ for all $x,y\in T$. To show that $\rho$ is a metric, it is enough to prove the triangle inequality for distinct $x,y,z\in T$. There are four cases to consider.

First, suppose $T[x,y]\subset T[x,z]$. Then 
%by definition of $\rho$ and the Hausdorff measure,
\begin{align*}
\rho(x,z) = \mathcal{H}^{1/\eps}(T[x,z]) = \mathcal{H}^{1/\eps}(T[x,y]) + \mathcal{H}^{1/\eps}(T[y,z]) = \rho(x,y) + \rho(y,z).
\end{align*}
Second, suppose $T[x,y]\cap T[x,z]=\{x\}$. Then 
\begin{align*}
\rho(x,z)&= \mathcal{H}^{1/\eps}(T[x,z]) \\
&\leq \mathcal{H}^{1/\eps}(T[x,z]) + \mathcal{H}^{1/\eps}(T[x,y]) \\
&= \mathcal{H}^{1/\eps}(T[y,z]) \\
&= \rho(y,z) \\
&\leq \rho(x,y) + \rho(y,z).
\end{align*}
Third, suppose $T[x,y]\cap T[x,z]=T[x,z]$. Then
$$\rho(x,z)= \mathcal{H}^{1/\eps}(T[x,z])\leq \mathcal{H}^{1/\eps}(T[x,y])=\rho(x,y)\leq \rho(x,y)+\rho(y,z).$$
Fourth, suppose $T[x,y]\cap T[x,z]=T[x,y']$ for some $y'\neq y, z$, since these cases have been addressed. Then
\begin{align*}
\rho(x,z)&=\mathcal{H}^{1/\eps}(T[x,y'])+\mathcal{H}^{1/\eps}(T[y',z]),\\
\rho(x,y)&=\mathcal{H}^{1/\eps}(T[x,y'])+\mathcal{H}^{1/\eps}(T[y',y]),\\
\rho(y,z)&=\mathcal{H}^{1/\eps}(T[y,y'])+\mathcal{H}^{1/\eps}(T[y',z]).
\end{align*}
These yield that $\rho(x,z)\leq \rho(x,y)+\rho(y,z)$.
As a result, $(T,\rho)$ is indeed a metric space.

Moreover, since $(T,d)$ is a $\eps$-snowtree, we have that
\begin{equation}\label{eq: snowbiLip pf eq}
\rho(x,y)\simeq d(x,y)^{1/\eps}, \quad\text{for all $x,y\in T$.}
\end{equation}
This implies that $F:(T,d)\to (T,\rho)$ with $F(x)=x$ for all $x\in T$ is a homeomorphism and, thus, $(T,\rho)$ is a metric tree. In addition, if 
\[ F_\eps:(T,d)\to (T,\rho^\eps), \quad\text{$F_\eps(x)=x$ for all $x\in T$,}\] 
then \eqref{eq: snowbiLip pf eq} implies that $F_\eps$ is a bi-Lipschitz map. 
    
It remains to show that $(T,\rho)$ is geodesic. Let $x,y\in T$ be distinct. Let $\gamma:[0,1]\to T[x,y]$ be a parametrization  of the arc $T[x,y]$. Then
$$ \ell(T[x,y])=\sup\left\{ \sum_{i=1}^n\rho(\gamma(t_{i-1}),\gamma(t_{i})): \,n\in \N,\,\, 0=t_0<\dots<t_n=1  \right\}.$$ 
However, for any $n\in \N$ and any partition $t_i$, we have
$$\sum_{i=1}^n\rho(\gamma(t_{i-1}),\gamma(t_{i}))=\sum_{i=1}^n \mathcal{H}^{1/\eps}(T[\gamma(t_{i-1}),\gamma(t_{i})])=\mathcal{H}^{1/\eps}(T[x,y])=\rho(x,y).$$ 
This completes the proof of the more challenging direction.    

For the opposite direction assume that there exists an $(1/\e)$-bi-H\"older homeomorphism $F:(T,d)\to (T',\rho)$ where $(T',\rho)$ is a geodesic tree. For the rest of the proof we denote by $\diam_d$ and $ \diam_{\rho}$ the diameters with respect to the metrics $d$ and $\rho$, respectively, and we follow a similar notation for the Hausdorff measure. For any non-empty $U\subset T$, we have
\begin{align*}
	\diam_d U &= \sup\{d(p,q):\, p,q\in U\}\\
	&\simeq \sup\{\rho(F(p),F(q))^\eps:\, p,q\in U\}\\ 
	&= (\diam_{\rho} F(U))^\eps, 
%	&= \diam_{\rho^\eps} F(U),
\end{align*}
which by definition of the Hausdorff measure yields for all $x,y\in T$
$$\mathcal{H}^{1/\eps}_d(T[x,y])\simeq \mathcal{H}^1_\rho(T[F(x),F(y)])=\rho(F(x),F(y))\simeq  d(x,y)^{1/\eps}.$$ This completes the proof of the remaining direction.
\end{proof}

\end{document}